\documentclass[11pt]{amsart}
\textwidth      = 16.5truecm
\evensidemargin = 0truecm
\oddsidemargin  = 0truecm
\usepackage{graphicx}
\usepackage{amssymb}
\usepackage{mathrsfs}
\usepackage{epstopdf}
\usepackage{hyperref}
\usepackage{color}
\usepackage{pgfplots}

\theoremstyle{definition} 
\theoremstyle{plain}      
\theoremstyle{plain}      
\theoremstyle{plain}      
\theoremstyle{plain}      \newtheorem{claim}{Claim}
\theoremstyle{plain}      \newtheorem{proposition}{Proposition}
\theoremstyle{plain}      \newtheorem{theorem}{Theorem}
\theoremstyle{remark}     \newtheorem{remark}{Remark}
\theoremstyle{remark}     
\theoremstyle{plain}

\title[Dynamical regimes in a trapped random walk]{Dynamical regimes and finite time behavior in a trapped random walk: a direct iterative approach}
\author{E. Floriani, R. Lima and E. Ugalde}
\address{Aix Marseille Univ, Universit\'e de Toulon, 
CNRS, CPT, Marseille, France}
\address{Dream and Science Factory, Marseille France.}
\address{Instituto de F\'isica, Universidad Aut\'onoma de San 
Luis Potos\'i, M\'exico.}

\email{floriani@cpt.univ-mrs.fr}
\email{dream.and.science@gmail.com}
\email{ugalde@ifisica.uaslp.mx}

\date{}                                         

\begin{document}

\begin{abstract}
We consider a basic one-dimensional model of diffusion which allows to obtain 
a diversity of diffusive regimes whose speed depends on the moments of the 
per-site trapping time. This model is closely related to the continuous time 
random walks widely studied in the literature. The model we consider lends itself to
a detailed treatment, making it possible to study deviations from normality 
due to finite time effects.
\end{abstract}

\maketitle

\bigskip

\section{Introduction}

\subsection{} The random walks with trapping times we study in this 
paper can be seen as the natural discrete version of the continuous 
time random walk (CTRW), which has been treated in mathematical physics 
literature since its introduction by Montroll and Weiss in 
1965~\cite{1965Montroll&Weiss}. A recent account on the subject can 
be found in~\cite{2014Metzler&al}, where CTRWs are considered as models 
for anomalous diffusion. A presentation of the relation between CTRWs 
and fractional diffusion can be found in~\cite{2000Metzler&Klafer}. 
The asymptotic behavior of continuous time random walks, and consequently 
of the model considered here, is well known and can be found 
in~\cite{2000Barkai&al,1995Kotulski}. In recent years two kind of 
generalizations of the continuous time random walk have been studied: 
on the one hand there are models including spatial 
inhomogeneities~\cite{2002Fontes&al,2015BenArous&al}, otherwise 
known as random environment; on the other hand, some models consider 
correlations between the space and time variables~\cite{2004Becker-Kern&al}. 
In both directions limit theorems have been obtained, and the relation 
to fractional dynamics and anomalous diffusion has been exposed. 
The aim of the present work is quite modest in comparison. We will 
treat in full detail, and from elementary grounds, the one-dimensional 
and discrete time version of the CTRW. We will be concerned in 
particular with the speed of convergence towards normal diffusion, 
and with the finite time deviations from normality which can be 
observed in the case of trapping time with infinite variance. Our main 
result, then, concerns the deviations from normality, and the emergence 
of a sub-diffusive behavior due to finite time effects, in the case of 
infinite variance. In the case of infinite mean trapping time, we study 
among other phenomena, the scaling behavior of the random walk and the 
deviations from this asymptotic scaling law at finite times. 

\subsection{} In our model, a particle performs a random walk in a 
one-dimensional lattice in such a way that at each site it can stay 
trapped for an integer random time. The properties of the diffusion 
process depend on the distribution of this trapping time. As mentioned 
above, the model can be seen as a natural discrete version of the 
CTRW widely studied in the literature. We will recover, from elementary 
grounds, a classical limit theorem, as for instance the fact that 
for trapping times with finite mean, the particle performs a random 
walk consistent with a diffusive process with a diffusion coefficient 
decreasing with the expected trapping time. In the case of infinite 
mean trapping time, a sub-diffusive behavior is obtained. In both 
cases we are able to determine the squared mean displacement at all 
times, and to evaluate its asymptotic behavior. Our formulas give 
quite precise estimates of the deviations from the asymptotic scaling 
at finite times. To the best of our knowledge, these are the first 
results of this kind. We are also able, in the case of finite mean 
trapping times, to estimate the speed of convergence towards normal 
diffusion, by using elementary probability inequalities. In this way 
we develop a direct approach to diffusion regimes leading to explicit 
estimates of rates of convergence and finite time deviations. 

\subsection{} The paper is organized as follows. In the next section we 
define the model and present some basic general results. In particular, 
we establish a formula that allows us to compute the evolution of the mean 
squared displacement (MSD) of the random walk. In Sections~\ref{sec:diffusive} 
and~\ref{sec:subdiffusive} we respectively examine the diffusive and 
sub-diffusive regimes, paying special attention to the finite time 
behavior of the MSD. Finally, in the last section we summarize  
our results and present concluding remarks.

\bigskip

\section{Generalities}\label{sec:generalities}  
\subsection{The model as a Markov chain}
\label{subsec:markov}
At each site $z\in {\mathbb Z}$, the diffusive particle gets trapped 
a random time $T_z\in {\mathbb N}_0$, after which it moves with equal 
probability to either of the two neighboring sites $z-1$ or $z+1$. 
We will consider the homogeneous case, for which the distribution of 
the random trapping time, $p_z(\tau):={\mathbb P}(T_z=\tau)$, does not 
depend on the site $z\in{\mathbb Z}$. Hence, we model the process as 
a discrete-time Markov chain on the set $E={\mathbb Z}\times {\mathbb N}_0$, 
where the first component represents the position of the random walker 
and the second one the trapping time. The transition probabilities are
\begin{equation}\label{eq:markovchain}
{\mathbb P}((X,T)_{t+1}=(z',\tau')|\,(X,T)_t
=(z,\tau))=
\begin{cases} 
p(\tau')/2& \mbox{ if } |z-z'|=1 
            \mbox{ and } \tau=0 \, ,            \\
        1 & \mbox{ if } z=z' 
            \mbox{ and } \tau'=\tau-1 \,\geq \, 0 \,,\\
        0 & \mbox{ otherwise.}
\end{cases} 
\end{equation} 
We are interested in the statistical behavior of the position $X_t$, as 
a function of the distribution $\{p(\tau):\tau\in{\mathbb N}_0\}$. We will 
assume that the particle starts its random walk at the position $z=0$, so 
that 
\begin{equation}\label{eq:initialcondition}
{\mathbb P}((X,T)_{t}=(z,\tau))=
\begin{cases} 
             p(\tau) & \mbox{ if } z=0,\\
             0       & \mbox{ otherwise,}
\end{cases}             
\end{equation}
for all $t\leq 0$.

\medskip\noindent Notice that the restriction of the process to the 
variable $T$ is also a Markov chain:
\begin{equation}\label{eq:markovchainT}
{\mathbb P}(T_{t+1}=\tau'|\,T_t=\tau)=
\begin{cases} 
p(\tau')& \mbox{ if } \tau=0 \, ,            \\
        1 & \mbox{ if } \tau'=\tau-1 \,\geq \, 0 \,,\\
        0 & \mbox{ otherwise.}
\end{cases} 
\end{equation} 

\begin{center}
\begin{figure}[h]

\begin{tikzpicture}[scale=0.7]
\draw[help lines] (1,0) grid (11,7);
\draw[<->] (0,0) -- (12,0);
\draw[->] (6,0) -- (6,8);
\draw[blue, ->] (6.1,0) -- (6.9,6);
\draw[blue, ->] (7.05,6) -- (7.05,5);
\draw[blue, ->] (7.05,5) -- (7.05,4);
\draw[blue, ->] (7.05,4) -- (7.05,3);
\draw[blue, ->] (7.05,3) -- (7.05,2);
\draw[blue, ->] (7.05,2) -- (7.05,1);
\draw[blue, ->] (7.05,1) -- (7.05,0.1); 
\draw[blue, ->] (7.1,0) -- (7.9,4);
\draw[blue, ->] (8.05,4) -- (8.05,3);
\draw[blue, ->] (8.05,3) -- (8.05,2);
\draw[blue, ->] (8.05,2) -- (8.05,1);
\draw[blue, ->] (8.05,1) -- (8.05,0.1);   
\draw[blue, ->] (8.1,0) -- (8.9,7);
\draw[blue, ->] (9.05,7) -- (9.05,6);
\draw[blue, ->] (9.05,6) -- (9.05,5);
\draw[blue, ->] (9.05,5) -- (9.05,4);
\draw[blue, ->] (9.05,4) -- (9.05,3);
\draw[blue, ->] (9.05,3) -- (9.05,2);
\draw[blue, ->] (9.05,2) -- (9.05,1);
\draw[blue, ->] (9.05,1) -- (9.05,0.1);  
\draw[red, ->] (8.9,0)-- (8.1,6);
\draw[red, ->] (7.95,6) -- (7.95,5);
\draw[red, ->] (7.95,5) -- (7.95,4);
\draw[red, ->] (7.95,4) -- (7.95,3);
\draw[red, ->] (7.95,3) -- (7.95,2);
\draw[red, ->] (7.95,2) -- (7.95,1);
\draw[red, ->] (7.95,1) -- (7.95,0.1);
\draw[red, ->] (7.9,0) -- (7.1,5);
\draw[red, ->] (6.95,5) -- (6.95,4);
\draw[red, ->] (6.95,4) -- (6.95,3);
\draw[red, ->] (6.95,3) -- (6.95,2);
\draw[red, ->] (6.95,2) -- (6.95,1);
\draw[red, ->] (6.95,1) -- (6.95,0.1);
\draw[red, ->] (6.9,0) -- (6.1,1);
\draw[red, ->] (5.95,1) -- (5.95,0.1);
\draw[red, ->] (5.9,0) -- (5.1,6);
\draw[red, ->] (4.95,6) -- (4.95,5);
\draw[red, ->] (4.95,5) -- (4.95,4);
\draw[red, ->] (4.95,4) -- (4.95,3);
\draw[red, ->] (4.95,3) -- (4.95,2);
\draw[red, ->] (4.95,2) -- (4.95,1);
\draw[red, ->] (4.95,1) -- (4.95,0.05);
\draw[blue, ->] (5.1,0) -- (5.9,4);
\node [right] at (6,7.5) {$T$};  
\node [below] at (11.5,0) {$X$}; 
\draw[blue] (6,0) circle [radius=0.1];   
\node [below] at (6,0) {$t=0$};
\draw[red] (6,4) circle [radius=0.1];   
\node [above] at (6,4) {$t=43$};
\end{tikzpicture}
\caption{The trajectory in the $(X,T)$ plane of a possible realization 
of the process. The trajectory starts at $(X,T)=(0,0)$ and reaches 
$(X,T)=(0,4)$ at time $t=43$. The parts of the trajectory 
where $X$ increases are in blue, the parts where $X$ decreases 
are in red.}~\label{fig:realizationA}
\end{figure}
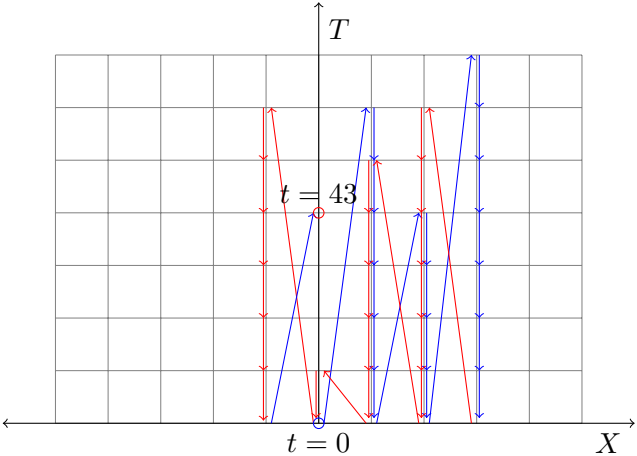 
\end{center}

\begin{center}
\begin{figure}[h]
\begin{tikzpicture}
\definecolor{mybarcolor}{RGB}{210 207 155}
\pgfmathsetmacro{\xmin}{-4}
\pgfmathsetmacro{\ymin}{-4}
\begin{axis}[ymin=0,%
  xmin=\xmin,%
  ymin=\ymin,
  ylabel=\textbf{\hskip 60pt $X_t$\hskip 10pt $T_t$},%
  xlabel=\textbf{\hskip 80pt  $t$},%
  grid=major,%
]
\addplot[ color = blue] table{%
0 0 
1 1
2 1
3 1
4 1
5 1
6 1 
7 1
8 2
9 2
10 2
11 2
12 2
13 3
14 3
15 3
16 3
17 3
18 3 
19 3
20 3
21 2
22 2
23 2
24 2
25 2
26 2
27 2
28 1
29 1
30 1
31 1
32 1
33 1
34 0
35 0
36 -1
37 -1
38 -1
39 -1
40 -1
41 -1
42 -1
43 0
};

\addplot[ color = blue, dashed] table{%
0 0 
1 6
2 5
3 4
4 3
5 2
6 1 
7 0
8 4
9 3
10 2
11 1
12 0
13 7
14 6
15 5
16 4
17 3
18 2 
19 1
20 0
21 6
22 5
23 4
24 3
25 2
26 1
27 0
28 5
29 4
30 3
31 2
32 1
33 0
34 1
35 0
36 6
37 5
38 4
39 3
40 2
41 1
42 0
43 4
};
\end{axis}
\end{tikzpicture}

\caption{The sequence of positions $X_t$ (solid line) and trapping 
times $T_t$ (dashed line), corresponding to the trajectory 
depicted in Figure~\ref{fig:realizationA}.}
\label{fig:realizationB}
\end{figure}
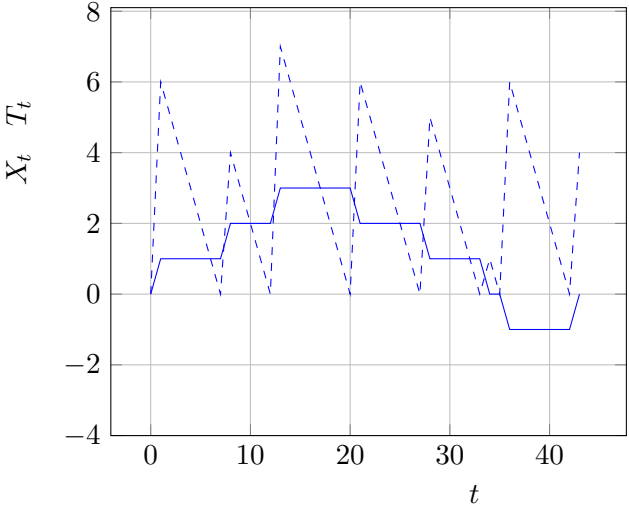
\end{center}

\medskip\noindent 
\subsection{The model as a subordinated random walk}
\label{subsec:subordinated} 
We can write the trapped random walk as a standard binary random walk 
subordinated to a monotonously increasing random variable governing 
the jump times. For this note that the space increments 
\[
\Delta_t:=X_{t+1}-X_t\in \{-1,0,1\},
\]
form a sequence of random variables satisfying 
$\mathbb{P}(\Delta_t=1\,|T_t=0)
          =\mathbb{P}(\Delta_t=-1\,|T_t=0)=1/2$, 
and $\mathbb{P}(\Delta_t=0\,|T_t>0)=1$.
Here $T_t$ denotes the trapping at time $t$ of the random waker. A 
direct computation shows that for each finite collection of times 
$t_1<t_2<\cdots <t_n$ and corresponding increments 
$\epsilon_1,\epsilon_2,\ldots, \epsilon_n\in \{-1,1\}$, we have
\[ 
\mathbb{P}(\Delta_{t_1}=\epsilon_1,\ldots,\Delta_{t_n}
            =\epsilon_k\,| T_{t_1}=\cdots=T_{t_n}=0)
            =\prod_{k=1}^n\mathbb{P}(
            \Delta_{t_k}=\epsilon_k\, |T_{t_k}=0)
=\frac{1}{2^n}.
\]
Therefore the increments $\Delta_t$ are independent when conditioned 
to the escape level $E_0:=\{T=0\}$ which is nothing but a renewal time. 
From this it readily follows that
\[
X_{t+1}=X_t+\chi_{\{T_t=0\}}\, \Delta_t,
\]
where $\chi_{\{T_t=0\}}$ is the characteristic function for the event 
$\{T_t=0\}\subset E_0$, and $\Delta_t$ is a sequence of i.i.d. random 
variables such that $\mathbb{P}(\Delta_t=1)=\mathbb{P}(\Delta_t=-1)=1/2$. 
From this we obtain
\[
X_{t+1}=\sum_{s=0}^{t} \chi_{\{T_s=0\} }\,
\Delta_s=\sum_{n=0}^{N_t}\Delta_n,
\]
where $N_t:=\#\{0\leq s \leq t:\ T_s=0\}$ is nothing but the number of 
times the random walker visited the escape level up to time $t$. Since 
$\Delta_n$ and $N_t$ are independent, then 
\begin{align}\label{eq:subordinatedrw}
\mathbb{P}\left(X_t=z\right)
&=\sum_{n=0}^{t-1} \mathbb{P}(N_{t-1}=n)\,
  \mathbb{P}\left( \sum_{m=0}^n \Delta_m=z\right)
\nonumber\\
&=\sum_{n=0}^{t-1}2^{-n}\mathbb{P}(N_{t-1}=n)\
  \left(\begin{array}{c} n \\                                                                     
              \frac{n+z}{2}\end{array}\right).
\end{align}  

\medskip \noindent 
\subsection{A recurrence for the MSD}
\label{subsec:sigmarecurrence} 
In order to quantify the diffusivity of the dynamics, we focus now on 
the mean squared displacement (MSD)
\begin{equation}\label{eq:MSD}
\sigma_t^2:={\mathbb E}(X_t^2)=
      \sum_{z\in{\mathbb Z}}z^2{\mathbb P}(X_t=z).
\end{equation} 
We have the following.

\medskip
\begin{proposition}[Recurrence relation]
\label{prop:sigmarecurrence}
Suppose that the trapped random walk $\{(X,T)_t\}_{t\in\mathbb{Z}}$ is 
at the origin at time $t=0$, i.e., it satisfies the initial 
condition~\eqref{eq:initialcondition}. Then, for each $t\in\mathbb{N}$ 
we have,
\[
\sigma_{t+1}^2=\sum_{0\leq\tau\leq t}p(\tau) 
                 \left(\sigma^2_{t-\tau}+1\right).
\]
\end{proposition}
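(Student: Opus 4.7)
The plan is to condition on the value of the initial trapping time $T_0=\tau$ and exploit the renewal structure implicit in the Markov chain~\eqref{eq:markovchain}: once the walker makes its first jump, the remaining process is an independent copy of the original one, translated in space and time. This is exactly the structure already extracted in Section~\ref{subsec:subordinated}, where the increments at escape times are i.i.d. and independent of everything that precedes.

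First, I would trace the chain starting from $(X_0,T_0)=(0,\tau)$ using~\eqref{eq:markovchain}: the walker sits at the origin during times $0,1,\ldots,\tau$, and the first jump happens at time $\tau+1$ to $X_{\tau+1}=\Delta\in\{-1,+1\}$ with equal probability, while a fresh trapping time $T_{\tau+1}\sim p$ is drawn. By the Markov property and the fact that this last pair has exactly the initial distribution~\eqref{eq:initialcondition}, the process $(X_{\tau+1+s}-\Delta,\,T_{\tau+1+s})_{s\ge 0}$ is an independent copy of the original trapped walk starting at the origin.

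Second, I would split the computation of $\sigma_{t+1}^2=\mathbb{E}(X_{t+1}^2)$ according to whether $\tau\le t$ or $\tau\ge t+1$. In the latter case the walker has not jumped by time $t+1$, so $X_{t+1}=0$ contributes nothing. In the former case I would write
\[
X_{t+1}=\Delta+X'_{t-\tau},
\]
where $X'$ denotes the independent copy of the walk identified above. Squaring and taking expectations, the cross term vanishes because $\mathbb{E}(\Delta)=0$ and $\Delta$ is independent of $X'_{t-\tau}$, while $\mathbb{E}(\Delta^2)=1$ and $\mathbb{E}((X'_{t-\tau})^2)=\sigma_{t-\tau}^2$. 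Hence $\mathbb{E}(X_{t+1}^2\,|\,T_0=\tau)=1+\sigma_{t-\tau}^2$ for $\tau\le t$, and summing against $p(\tau)$ yields exactly the claimed identity.

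There is no real obstacle; the only point that requires care is the bookkeeping of indices, namely verifying that the first jump occurs at time $\tau+1$ rather than $\tau$, so that the induced independent copy runs for $t-\tau$ steps (not $t-\tau-1$ or $t-\tau+1$). If one prefers, the same recurrence can alternatively be derived from the subordination formula~\eqref{eq:subordinatedrw} by conditioning on $N_t$ and separating the last renewal, but the direct conditioning on $T_0$ is shorter and more transparent.
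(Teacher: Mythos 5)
Your proposal is correct, but it reaches the recurrence by a genuinely different route than the paper. You perform a \emph{first-renewal} decomposition: conditioning on the initial trapping time $T_0=\tau$, you observe that the first jump occurs at time $\tau+1$ and that the post-jump process, recentred by the jump $\Delta$, is an independent copy of the original walk satisfying the same initial condition; the identity $\mathbb{E}(X_{t+1}^2\,|\,T_0=\tau)=1+\sigma^2_{t-\tau}$ then follows from $\mathbb{E}(\Delta)=0$, $\mathbb{E}(\Delta^2)=1$ and independence, and the boundary check ($\tau=t$ giving $1+\sigma_0^2$, $\tau>t$ giving $0$) is exactly the index bookkeeping you flag. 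The paper instead performs a \emph{last-jump} decomposition: it expresses $\mathbb{P}(X_t=z)$ through the distribution restricted to the escape level $E_0=\{T=0\}$ and the tail $\mathbb{P}(T\geq\tau)$, iterates the convolution recurrence for that restricted distribution one more step, and expands $(z\pm1)^2$ so that the cross terms $\pm2\mathbb{E}(X_{t-\tau-1})$ cancel between the two jump directions. Your argument is shorter and more transparent, since it packages the symmetry cancellation into the single statement $\mathbb{E}(\Delta)=0$ and avoids the double convolution sums; the paper's computation, on the other hand, yields along the way the explicit representation of $\mathbb{P}(X_t=z)$ in terms of the escape-level distribution, which is of the same flavour as the subordination formula it uses later. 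Both proofs ultimately rest on the same renewal structure, and yours is a valid, self-contained alternative.
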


\medskip\noindent
We will use this recurrence relation to quantify the diffusivity of the 
trapped random walk as a function of the trapping time distribution. 

\medskip
\begin{proof}
For each $t\geq 1$ we have
\begin{align*}
{\mathbb P}(X_t=z)
&=\sum_{0\leq \tau} {\mathbb P}((X,T)_t=(z,\tau))
 =\sum_{0\leq \tau}\sum_{0\leq \tau'\leq t-1}
  \frac{p(\tau+\tau')}{2}
  {\mathbb P}\{(X,T)_{t-\tau'-1}= (z\pm 1,0)\}\\
&=\sum_{0\leq \tau' \leq t-1}{\mathbb P}
  \{(X,T)_{t-\tau'-1}=(z\pm 1,0)\}
        \sum_{0\leq \tau}\frac{p(\tau+\tau')}{2}\\
&=\sum_{0\leq \tau' \leq t-1}
  \frac{{\mathbb P}\{(X,T)_{t-\tau'-1}=(z\pm 1,0)\}}{2} 
                            \mathbb{P}(T\geq \tau').
\end{align*}
where 
${\mathbb P}\{(X,T)_{t-\tau'-1}= (z\pm 1,0)\}=
       {\mathbb P}\{(X,T)_{t-\tau'-1}= (z-1,0)\}+
       {\mathbb P}\{(X,T)_{t-\tau'-1}= (z-1,0)\}$. 
Hence, the distribution of $X_t$ is determined by its distribution 
restricted to the escape level $E_0$ and the queue of the trapping 
time distribution $\tau\mapsto \mathbb{P}(T\geq\tau)$. The distribution 
of $X_t$ restricted to $E_0$ satisfies the recurrence relation,
\[ 
{\mathbb P}\{(X,T)_{t}= (z,0)\}=\sum_{0\leq \tau\leq t-1}
\frac{p(\tau)}{2}{\mathbb P}\{(X,T)_{t-\tau-1}=(z\pm 1,0)\}.
\]
Using this we obtained
\begin{align*}
\sigma_t^2
&=\sum_{z\in{\mathbb Z}}z^2\sum_{0\leq \tau' \leq t-1}
  \frac{{\mathbb P}\{(X,T)_{t-\tau'-1}= (z\pm 1,0)\}}{2}
  \mathbb{P}(T\geq \tau')\\
&=\sum_{z\in{\mathbb Z}}z^2\sum_{0\leq \tau' \leq t-1}
  \mathbb{P}(T\geq \tau')
  \sum_{0\leq\tau<t-\tau'-1}\frac{p(\tau)}{2}
  \frac{{\mathbb P}\{(X,T)_{t-\tau'-\tau-2}= 
                                     (z-1\pm 1,0)\}}{2}\\
& \hskip 20pt +
\sum_{z\in{\mathbb Z}}z^2\sum_{0\leq \tau' \leq t-1}
  \mathbb{P}(T\geq \tau')
\sum_{0\leq\tau<t-\tau'-1}\frac{p(\tau)}{2}
\frac{{\mathbb P}\{(X,T)_{t-\tau'-\tau-2}= 
                                    (z+1\pm 1,0)\}}{2},
\end{align*}
which can be rewritten as
\begin{align*}
\sigma_t^2
&=\sum_{0\leq\tau\leq t-1}\frac{p(\tau)}{2}
\sum_{0\leq \tau' \leq t-\tau-2}\mathbb{P}(T\geq \tau')
\sum_{z\in{\mathbb Z}}(z+1)^2
 \frac{{\mathbb P}\{(X,T)_{(t-\tau-1)-\tau'-1}=
                                     (z\pm 1,0)\}}{2}\\
& \hskip 10pt + \sum_{0\leq\tau\leq t-1}\frac{p(\tau)}{2}
       \sum_{0\leq \tau' \leq t-\tau-2}
                  \mathbb{P}(T\geq \tau')
                       \sum_{z\in{\mathbb Z}}(z-1)^2
 \frac{{\mathbb P}\{(X,T)_{(t-\tau-1)-\tau'-1}= 
                                     (z\pm 1,0)\}}{2}\\
&=\sum_{0\leq\tau\leq t-1}\frac{p(\tau)}{2}
   \left(\sigma^2_{t-\tau-1} + 
       2{\mathbb E}\left(X_{t-\tau-1}\right)+1\right)+ 
   \sum_{0\leq\tau\leq t-1}\frac{p(\tau)}{2}
        \left(\sigma^2_{t-\tau-1} - 
       2{\mathbb E}\left(X_{t-\tau-1}\right)+1\right)\\
&=\sum_{0\leq\tau \leq t-1}p(\tau)
    \left(\sigma^2_{t-\tau-1}+1\right).
\end{align*}
\end{proof}

\medskip \noindent 
The recurrence relation just obtained already ensures the unboundedness 
of the trapped random walk. Indeed, we have the following.

\medskip
\begin{proposition}[Unboundedness]
\label{prop:unboundedness}
Suppose that the trapped random walk $\{(X,T)_t\}_{t\in\mathbb{Z}}$ 
satisfies the initial condition~\eqref{eq:initialcondition}. Then the 
sequence $\{\sigma_t^2\}_{t\in{\mathbb N}}$ is increasing and unbounded.
\end{proposition}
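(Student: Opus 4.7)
The plan is to read both properties directly off the recurrence of Proposition~\ref{prop:sigmarecurrence}, using the initial condition which gives $\sigma_t^2=0$ for all $t\le 0$, and the normalization $\sum_{\tau\ge 0}p(\tau)=1$.

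For monotonicity, I would rewrite the recurrence as
\[
\sigma_{t+1}^2 \;=\; \sum_{\tau=0}^{t}p(\tau)\,\sigma_{t-\tau}^2 \;+\; \sum_{\tau=0}^{t}p(\tau),
\]
and subtract the analogous expression for $\sigma_t^2$. After separating the $\tau=t$ term and re-indexing, the difference collapses into the telescoping form
\[
\sigma_{t+1}^2-\sigma_t^2 \;=\; \sum_{\tau=0}^{t-1}p(\tau)\bigl(\sigma_{t-\tau}^2-\sigma_{t-1-\tau}^2\bigr) \;+\; p(t),
\]
where the spurious term $p(t)\sigma_0^2$ vanishes since $\sigma_0^2=0$. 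A straightforward induction on $t$, with base case $\sigma_1^2-\sigma_0^2=p(0)\ge 0$, then shows each bracket is non-negative, so $\sigma_{t+1}^2\ge \sigma_t^2$.

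For unboundedness I would argue by contradiction. Being non-decreasing, $\{\sigma_t^2\}$ has a limit $L\in[0,+\infty]$; assume $L<\infty$ and pass to the limit $t\to\infty$ in the displayed recurrence. The constant sum $\sum_{\tau=0}^{t}p(\tau)$ tends to $1$ since $p$ is a probability distribution. For the convolution $\sum_{\tau=0}^{t}p(\tau)\,\sigma_{t-\tau}^2$, I would truncate at a cut-off $K$: for each fixed $\tau\le K$ one has $\sigma_{t-\tau}^2\to L$ as $t\to\infty$, while the tail is controlled by $\sigma_s^2\le L$ together with $\sum_{\tau>K}p(\tau)\to 0$ as $K\to\infty$. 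This yields $\sum_{\tau}p(\tau)\,\sigma_{t-\tau}^2\to L$, so the limit of the recurrence reads $L=L+1$, a contradiction.

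The one genuinely non-routine step is this limit exchange in the convolution, precisely because $p$ may have heavy tails (which is the regime of interest in the sub-diffusive case treated later). The $\varepsilon$--$K$ truncation handles this without any integrability assumption on $T$, using only that $\sum_\tau p(\tau)=1$; the inductive monotonicity step and the base case are entirely mechanical.
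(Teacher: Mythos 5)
Your proposal is correct and follows essentially the same route as the paper: monotonicity via the telescoped recurrence for $\Delta\sigma_{t+1}^2$ with induction from $\Delta\sigma_1^2=p(0)$, and unboundedness by contradiction from passing to the limit in the recurrence. The only difference is cosmetic: the paper uses a one-sided bound ($\sigma_{t-\tau}^2\ge\sigma_\infty^2-1/2$ for $\tau\le t-t_0$) to get a strict inequality, whereas you compute the full limit of the convolution by an $\varepsilon$--$K$ truncation to reach $L=L+1$; both handle heavy tails using only $\sum_\tau p(\tau)=1$.
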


\medskip 
\begin{proof}
For the first claim it is enough to notice that for each $t\geq 0$, 
$\Delta\sigma_{t+1}^2:=\sigma_{t+1}^2-\sigma_t^2$ satisfies the 
recurrence relation 
$\Delta\sigma_{t+1}^2=
      p(t)+\sum_{0\leq \tau\leq t}p(\tau)\Delta\sigma^2_{t-\tau}$, 
and since $\Delta\sigma_1^2=p(0)\geq 0$, it follows by induction on $t$ 
that $\Delta\sigma_{t}^2\geq 0$ for all $t\in {\mathbb N}$. 

\medskip \noindent For the other claim, if we suppose on the contrary 
that the sequence $\{\sigma_t^2\}_{t\in {\mathbb N}}$ is bounded, since 
it is monotonous, then it necessarily has a limit 
$\sigma^2_\infty:=\lim_{t\to\infty}\sigma_t^2$. This limit must satisfy 
\[
\sigma_{\infty}^2=\lim_{t\to\infty}\sum_{0\leq\tau\leq t}p(\tau) 
                          \left(\sigma^2_{t-\tau}+1\right),
\]
but this is impossible since there exist $t_0\in{\mathbb N}$ such that 
$\sigma^2_t\geq \sigma^2_\infty - 1/2$ for each $t\geq t_0$, and therefore 
\[
\lim_{t\to\infty}\sum_{0\leq\tau\leq t}p(\tau) 
   \left(\sigma^2_{t-\tau} +1\right)
         \geq \left(\sigma_\infty^2+1/2\right)         
         \left(\lim_{t\to\infty}
\sum_{0\leq\tau \leq t-t_0}p(\tau)\right) >\sigma_\infty^2.
\]
\end{proof}
\bigskip
\section{Diffusive regime}\label{sec:diffusive}
\noindent In this section we study the situation 
$\mathbb{E}(T^\alpha)<\infty$, for some $\alpha \geq 1$. In this case, 
$X_t$ follows asymptotically a normal diffusion, and satisfies a 
central limit theorem. Nevertheless, the speed of convergence towards 
the normal behavior strongly depends on the trapping time distribution 
tail, and finite time deviations from normality appear. We will analyze 
the convergence towards normal diffusion, by first considering the 
behavior of the mean squared displacement, and then we will prove a 
Central Limit Theorem. 

\subsection{Normal diffusion via MSD}
\label{subsec:NormalMSD}
In all cases when $\mathbb{E}(T)<\infty$, the mean squared 
displacement asymptotically follows a linear growth, with slope, or 
diffusion coefficient, $\mathcal{D}:=(\mathbb{E}(T)+1)^{-1}$. Nevertheless, 
the finite time deviations from this linear growth may give place to 
an apparent sub-linear growth. Our main result concerning this is the 
following.

\medskip
\begin{theorem}[Normal diffusion]\label{teo:NormalMSD}
Let us suppose that $\mathbb{E}(T)<\infty$ and let 
$\mathcal{D}:=(\mathbb{E}(T)+1)^{-1}$. Then, the sequence 
$|\sigma_t^2-\mathcal{D}\,t|$ is bounded if and only if 
$\mathbb{E}(T^2)<\infty$. Furthermore, 
$\lim_{t\to\infty} \mathcal{D} \, t/\sigma_t^2=1$. 
\end{theorem}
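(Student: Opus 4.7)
The plan is to analyze the error $\epsilon_t := \sigma_t^2 - \mathcal{D}\,t$ via the recurrence of Proposition~\ref{prop:sigmarecurrence}. Substituting $\sigma_t^2 = \mathcal{D}\,t + \epsilon_t$ and using the defining identity $\mathcal{D}(1+\mathbb{E}(T)) = 1$, the $t$-linear terms cancel and one arrives at a renewal-type equation
\[
\epsilon_{t+1} = r_t + \sum_{\tau=0}^{t} p(\tau)\,\epsilon_{t-\tau}, \qquad \epsilon_0 = 0,
\]
with tail-dependent source $r_t = \mathcal{D}\,\mathbb{E}\bigl((T-t)\mathbf{1}_{\{T>t\}}\bigr) - \mathbb{P}(T>t)$. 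Interchanging the order of summation yields $\sum_{t\geq 0}\mathbb{P}(T>t) = \mathbb{E}(T)$ and $\sum_{t\geq 0}\mathbb{E}\bigl((T-t)\mathbf{1}_{\{T>t\}}\bigr) = \frac{1}{2}(\mathbb{E}(T^2)+\mathbb{E}(T))$, so $\sum_t|r_t|<\infty$ is equivalent to $\mathbb{E}(T^2)<\infty$, and otherwise the partial sums $\sum_{t<N}r_t$ diverge to $+\infty$.

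The ``if'' direction is then immediate: letting $M_t := \max_{s\leq t}|\epsilon_s|$, the recurrence together with $\sum_{\tau\leq t}p(\tau)\leq 1$ gives $M_{t+1}\leq M_t + |r_t|$, so $M_t\leq\sum_s|r_s|<\infty$. For the ``only if'' direction I would telescope the recurrence over $t = 0,\ldots,N-1$ to obtain the identity
\[
\sum_{t=0}^{N-1} r_t \;=\; \epsilon_N + \sum_{u=0}^{N-1}\epsilon_{N-1-u}\,\mathbb{P}(T>u).
\]
If $|\epsilon_t|$ were uniformly bounded by some $C$, the right-hand side would stay bounded by $C(1+\mathbb{E}(T))$, contradicting the divergence to $+\infty$ of the left-hand side in the case $\mathbb{E}(T^2) = \infty$.

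For the final statement $\mathcal{D}\,t/\sigma_t^2 \to 1$, rather than further analyze $\epsilon_t$, I would invoke the subordinated representation of Section~\ref{subsec:subordinated}. There $X_t$ is a sum of $N_{t-1}$ i.i.d.\ symmetric $\pm 1$ increments independent of the jump count, hence $\sigma_t^2 = \mathbb{E}(N_{t-1})$. The sequence $(N_t)_{t\geq 0}$ is the renewal counting process associated with i.i.d.\ waiting times $T_i + 1$ of common mean $1/\mathcal{D}$, so the strong law of large numbers applied to $S_n = \sum_{i<n}(T_i+1)$ yields $N_t/t\to\mathcal{D}$ almost surely; since $0\leq N_t\leq t+1$, bounded convergence upgrades this to $\mathbb{E}(N_t)/t\to\mathcal{D}$, i.e., $\sigma_t^2/t\to\mathcal{D}$.

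The main obstacle is the careful algebraic extraction of the clean form of $r_t$ and of the telescoped identity above; once these are in hand, both directions of the equivalence reduce to elementary termwise bounds, and the asymptotic statement follows directly from the probabilistic structure already developed in Section~\ref{subsec:subordinated}.
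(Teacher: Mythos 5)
Your proof is correct, and it takes a genuinely different route from the paper in all three parts, even though it starts from the same recurrence $\epsilon_{t+1}=r_t+\sum_{\tau\le t}p(\tau)\,\epsilon_{t-\tau}$ (the paper's Equation~\eqref{eq:epsilon}). The paper solves this recurrence explicitly through the renewal sequence $Q_t$ and sandwiches $\epsilon_t$ between $e^{-q\mathbb{E}(T)}R_t-\mathbb{E}(T)$ and $R_t$, where $R_t=\sum_{s\le t}\mathcal{D}\,\mathbb{E}\bigl((T-s)\mathbf{1}_{\{T>s\}}\bigr)$; both directions of the equivalence and the limit $\sigma_t^2/t\to\mathcal{D}$ are then read off from the behavior of $R_t$. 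You instead get the ``if'' direction from the one-line maximal bound $M_{t+1}\le M_t+|r_t|$, the ``only if'' direction from the telescoped identity $\sum_{t<N}r_t=\epsilon_N+\sum_{u<N}\epsilon_{N-1-u}\mathbb{P}(T>u)$ (both identities check out; your evaluation of $\sum_t\mathbb{E}((T-t)\mathbf{1}_{\{T>t\}})=\tfrac12(\mathbb{E}(T^2)+\mathbb{E}(T))$ is the same computation the paper does for $R_t$), and the limit from the subordination $\sigma_t^2=\mathbb{E}(N_{t-1})$ plus the renewal SLLN and bounded convergence (valid since $N_t\le t+1$). What the paper's route buys is a quantitative two-sided estimate on $\epsilon_t$ in terms of $R_t$, which it reuses immediately afterwards to extract the $t^{3-q}$ finite-time deviation for power-law tails; your route does not deliver that lower bound directly. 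What your route buys is robustness: the paper's lower bound on $Q_t$ is $\prod_{\tau\ge 0}\mathbb{P}(T\le\tau)\ge e^{-q\mathbb{E}(T)}$ with $q=-\log(p(0))/(1-p(0))$, which is vacuous when $p(0)=0$, whereas your telescoping argument needs no such positivity; likewise your renewal-theoretic proof of $\sigma_t^2/t\to\mathcal{D}$ sidesteps the $R_t/t\to 0$ estimate entirely. Do make sure to note that $\sigma_t^2>0$ for large $t$ (guaranteed by Proposition~\ref{prop:unboundedness}) before dividing to conclude $\mathcal{D}\,t/\sigma_t^2\to 1$.
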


\begin{proof}
Let  
$R_t:=\mathcal{D}
      \sum_{\tau\leq t}
      \sum_{\tau'>\tau}(\tau'-\tau)p(\tau')$ 
for each $t\geq 0$. We start by proving  
\begin{equation}\label{eq:MSDinequality}
e^{-q\mathbb{E}(T)}R_t-\mathbb{E}(T)
        \leq \sigma_t^2-\mathcal{D}\,t\leq R_t,
\end{equation}
with $q=-\log(p(0))/(1-p(0))$. For this let 
$\epsilon_t=\sigma_t^2-\mathcal{D}\, t$. According to Proposition~\ref{prop:sigmarecurrence} we have
\begin{align}
\epsilon_{t+1}
&=\sum_{0\leq \tau \leq t} p(\tau)\epsilon_{t-\tau} 
    +\mathbb{P}(T\leq t)
    -\mathcal{D}\, \left(t\mathbb{P}(T>t)+1+
  \sum_{0\leq \tau \leq t}\tau p(\tau)\right),\nonumber\\
&=\sum_{0\leq \tau \leq t} p(\tau)\epsilon_{t-\tau}+   
    \mathcal{D}\,
    \sum_{\tau > t} (\tau-t) p(\tau)-\mathbb{P}(T > t), 
    \label{eq:epsilon}
\end{align} 
which can be written as
\begin{equation}\label{eq:epsilon-delta}
\epsilon_{t+1}
   =\sum_{0\leq \tau \leq t}\delta_\tau^+\, Q_{t-\tau}-
    \sum_{0\leq \tau \leq t}\delta_\tau^-\, Q_{t-\tau},
\end{equation}
with 
$\delta_\tau^+:=\mathcal{D}\,\sum_{\tau > t} (\tau-t) p(\tau)$, 
$\delta_\tau^-:=\mathbb{P}(T > t)$ and $Q_{\tau}$ defined recursively 
by the convolution $Q_{t+1} =\sum_{0\leq\tau\leq t}Q_{t-\tau}\,p(\tau)$, 
starting with $Q_0=1$. Since
\[
0<\mathbb{P}(T\leq t) \min_{\tau\leq t}Q_\tau
  \leq Q_{t+1}
  \leq \mathbb{P}(T\leq t)\max_{\tau\leq t}Q_\tau \leq 1,
\]
then
\begin{equation}\label{eq:boundQ}
e^{-q\, \mathbb{E}(T)}=
  e^{-\alpha\sum_{\tau\geq 0}\mathbb{P}(T>\tau)}
\leq 
\prod_{\tau\geq 0} \mathbb{P}(T\leq \tau) \leq Q_t\leq 1.
\end{equation}
Finally, since 
$0\leq \sum_{0\leq \tau\leq t}\delta_\tau^-
 =\sum_{0\leq \tau\leq t}\mathbb{P}(T < \tau)
\leq \mathbb{E}(T)$, 
taking into account Equations~\eqref{eq:epsilon-delta} and~\eqref{eq:boundQ}, 
we obtain
$e^{-q\, \mathbb{E}(T)}
    \left(\sum_{0\leq \tau\leq t}\delta_\tau^+\right)
    -\mathbb{E}(T) \leq \epsilon_{t+1} 
\leq \sum_{0\leq \tau\leq t}\delta_\tau^+$,
which is exactly~\eqref{eq:MSDinequality} since $\sum_{0\leq\tau\leq t}\delta_\tau^+=R_t$.

\medskip \noindent
To complete the proof, notice that
\[R_t:=\mathcal{D}\,
 \sum_{\tau\leq t}\sum_{\tau'>\tau}(\tau'-\tau)p(\tau')
 =\mathcal{D}\,\sum_{\tau\leq t+1} p(\tau)\,
 \frac{\tau(\tau+1)}{2} +(t+1)\,\mathcal{D}\,
 \sum_{\tau  > t+1}\left(\tau-\frac{t}{2}\right)\,p(\tau).
\]
Hence, when $\mathbb{E}(T^2)<\infty$, we necessarily have 
$t\, \sum_{\tau\geq t}\tau\, p(\tau)\rightarrow 0$ when $t\to \infty$, 
and therefore
$\lim_{t\to \infty}R_t
=\mathcal{D}\left(\mathbb{E}(T^2)+\mathbb{E}(T)\right)/2$,
which implies that $|\sigma_t^2-\mathcal{D}\, t|$ is bounded. If on the 
contrary $\mathbb{E}(T^2)=\infty$, we necessarily have
\[
\liminf_{t\to\infty}\sigma_t^2-\mathcal{D}\,t
\geq 
   \liminf_{t\to\infty}\mathcal{D}\, e^{-q\,\mathbb{E}(T)}\,
   \sum_{\tau\leq t+1} p(\tau)\,\frac{\tau(\tau+1)}{2}-\mathbb{E}(T)
=\infty
\]
In any case, the Inequalities~\eqref{eq:MSDinequality} imply
\[e^{-q\,\mathbb{E}(T)}\limsup_{t\to\infty}R_t/t
\leq \liminf_{t\to\infty} (\sigma_t^2/t-\mathcal{D})
\leq \limsup_{t\to\infty} (\sigma_t^2/t-\mathcal{D})
\leq \liminf R_t/t.\]
Hence, $\lim_{t\to\infty}R_t/t$ exists, and since $\mathbb{E}(T)<\infty$, 
then we necessarily have
\begin{align*}
0\leq \lim_{t\to\infty}\frac{R_t}{t}
&\leq \frac{1}{2}\lim_{t\to\infty}\frac{1}{t}
  \sum_{\tau\leq t+1}\tau^2\,p(\tau)\\
& = \frac{1}{2}\lim_{t\to\infty}\frac{1}{t}
\sum_{\tau\leq \sqrt{t}}\tau^2\,p(\tau)+
\frac{1}{2}\lim_{t\to\infty}\frac{1}{t}
\sum_{\sqrt{t}<\tau\leq t}\tau^2\,p(\tau)\\
&\leq \frac{1}{2}\lim_{t\to\infty}
      \frac{1}{\sqrt{t}}\mathbb{E}(T)+
\frac{1}{2}\lim_{t\to\infty}
\sum_{\sqrt{t}<\tau \infty}\tau\,p(\tau)=0
\end{align*}
and therefore $\lim_{t\to\infty}\sigma_t^2/t=\mathcal{D}$.
\end{proof}

\bigskip\noindent
The diffusive case is the one where the $T$-Markov 
chain~\eqref{eq:markovchainT} admits a stationary distribution 
$\pi(\tau)$. Indeed, the stationarity condition is 
\[  \pi(\tau') = \sum_{\tau=0}^{\infty} \pi(\tau) 
{\mathbb P}(T_{t+1}=\tau'|\,T_t=\tau) 
= \pi(0) p(\tau') + \pi(\tau' + 1) \]
so that
\[  \pi(\tau) = \pi(0) \sum_{k=\tau}^{\infty} p(k) \]
The normalization condition
\[ 1 = \sum_{\tau=0}^{\infty} \pi(\tau) = 
\pi(0)  \sum_{\tau=0}^{\infty} \sum_{k=\tau}^{\infty} p(k) =  
\pi(0)  \sum_{\tau=0}^{\infty} (\tau+1) p(\tau) 
= \pi(0) (\mathbb{E}(T)+1) \]
implies that the stationary distribution exists if and only if 
$\mathbb{E}(T)$ is finite. In this case
\[  \pi(\tau) = \frac{1}{\mathbb{E}(T)+1} \sum_{k=\tau}^{\infty} 
p(k) = \frac{1}{\mathcal{D}} \sum_{k=\tau}^{\infty} p(k). \]
\bigskip

\paragraph{\emph{Exponential trapping time}.} 
A nice example is supplied by the exponential trapping time 
distribution. Let us assume that $p(\tau)=(1-\lambda)\lambda^{\tau}$ 
for each $\tau$. In this case 
\[{\mathbb E}(T)+1=
(1-\lambda)\sum_{\tau=1}^\infty \tau\lambda^{\tau}+1
                =\frac{\lambda}{1-\lambda}+1
                =\frac{1}{1-\lambda},
\]
and so $\sigma^2_t\sim (1-\lambda)\,t$ is the expected asymptotic 
behavior for the MSD. If $\sigma_t^2=(1-\lambda)\,t+\epsilon_t$, 
then, according to Equation~\eqref{eq:epsilon} we have
\begin{align*}
\epsilon_{t+1}
&=(1-\lambda)\sum_{0\leq \tau\leq t}\lambda^{\tau}
                                   \epsilon_{t-\tau}
+(1-\lambda)^2\sum_{\tau> t}\lambda^{\tau}(\tau-t)
                 -(1-\lambda)\sum_{\tau>t}\lambda^\tau\\
&=(1-\lambda)\sum_{0\leq \tau\leq t}\lambda^{\tau}
                                    \epsilon_{t-\tau}
+(1-\lambda)^2\lambda^{t}\sum_{\tau\geq 0}\lambda^{\tau}
\tau-\lambda^{t+1}=(1-\lambda)
\sum_{0\leq \tau\leq t}\lambda^{\tau}\epsilon_{t-\tau}.
\end{align*}
Since $\epsilon_0=0$, it can be easily checked, by induction for 
instance, that the previous recursion has solution $\epsilon_t=0$ 
for all $t\in{\mathbb N}_0$ and in this case 
\[
\sigma^2_t=(1-\lambda)\,t.
\] 
We have therefore an exact diffusive behavior for all times.
 
\bigskip
\paragraph{\emph{Power-law trapping time}.} 
Another interesting example is given by power-law distributed trapping times. Let us assume that $p(\tau)=(\tau+1)^{-q}/\zeta(q)$ for each $\tau$. For $2 < q \leq 3$ we have 
\[\mathbb{E}(T)=\zeta(q-1)/\zeta(q)-1<\infty\] but $\mathbb{E}(T^2)=\infty$. In this case we have
\begin{align*}
R_t&:=\frac{\sum_{\tau\leq t+1} p(\tau)\,\tau(\tau+1)/2
           }{\mathbb{E}(T)+1}+(t+1)
      \frac{\sum_{\tau  > t+1} (\tau-t/2)\,p(\tau)
           }{\mathbb{E}(T)+1}\\
    &=\frac{\sum_{\tau\leq t+1}
      \left((\tau+1)^{-q+2}-(\tau+1)^{-q+1}\right)
           }{2\,\zeta(q-1)} 
    + \frac{(t+1)\sum_{\tau >t+1}
      \left((\tau+1)^{-q+1}-(t/2+1)(\tau+1)^{-q}\right)
           }{\zeta(q-1)}.
\end{align*}
From here it is easy to deduce that $R_t\geq c (t+2)^{3-q} -c_0$ for each 
$t\geq 0$, for some constants $c, c_0$ that depend on $q$. Taking this into account, according to Theorem~\ref{teo:NormalMSD} we have
\[
\sigma_t^2\geq\frac{\zeta(q)\,t
                  }{\zeta(q-1)}+c\,(t+2)^{3-q}-c_0
\]
for each $t\geq 0$. Hence, in this case the diffusive behavior suffers an 
increasingly diverging sub-linear deviation of the order of $t^{3-q}$. In this 
case, although $\sigma_t^2$ is asymptotically dominated by a linear behavior, the 
sub-linear deviation on $\sigma_t^2$ is enough to produce an effective sub-
diffusion at all finite times. Indeed, using the recurrence established in 
Proposition~\ref{prop:sigmarecurrence} we computed $\sigma_t^2$ for 
$1\leq t\leq 2^{17}$, and fitted it to a power-law 
$\sigma_t^2={\mathcal O}(t^\beta)$, with 
exponent $\beta=\beta_N(q)$ depending on the time interval chosen for the fit. 
As expected, $\beta(q)\to 1$ as the length of the fitting time interval 
increases. We chose the time interval $10\leq t\leq N$, with $N=2^{13}, 2^{15}$ 
and $2^{17}$, to compute the power-law approximation. The exponents $\beta_N(q)$ 
we obtained are plotted in Figure~\ref{fig:exponent>1}. For each $N$, the 
behavior of $\beta_N(q)$ can be very well fitted by a sigmoidal function 
$\beta(q)=2/(1+ e^{r|q-3|^\eta} )$.
\begin{center}
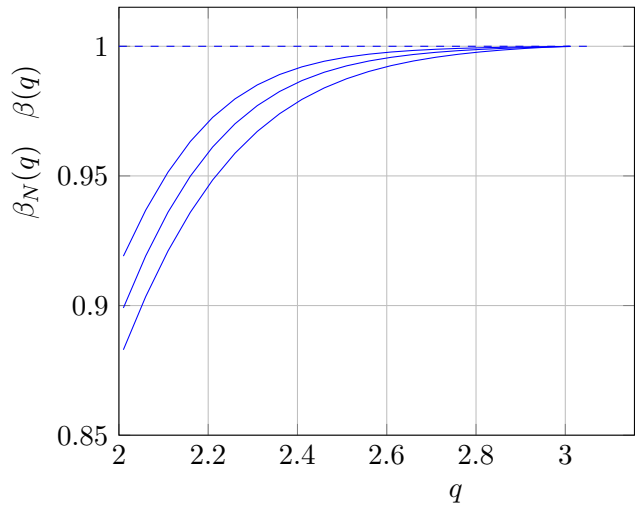
\begin{figure}[h]
\begin{tikzpicture}
\definecolor{mybarcolor}{RGB}{210 207 155}
\pgfmathsetmacro{\xmin}{2}
\pgfmathsetmacro{\ymin}{0.85}
\begin{axis}[ymin=0,%
  xmin=\xmin,%
  ymin=\ymin,
  ylabel=\textbf{$\hskip 60pt \beta_{N}(q)$\hskip 10pt $\beta(q)$},%
  xlabel=\textbf{\hskip 60pt  $q$},%
  grid=major,%
]

\addplot[ color = blue]  table {%
   2.010000000000000   0.919066413639457
   2.060000000000000   0.936791849435091
   2.110000000000000   0.951469421361596
   2.160000000000000   0.963290069284060
   2.210000000000000   0.972575043304649
   2.260000000000000   0.979712690502832
   2.310000000000000   0.985102539369774
   2.360000000000000   0.989115592569341
   2.410000000000000   0.992072349098775
   2.460000000000000   0.994235306508175
   2.510000000000000   0.995811028876723
   2.560000000000000   0.996957249210187
   2.610000000000000   0.997791741933496
   2.660000000000000   0.998401015742406
   2.710000000000000   0.998847881125028
   2.760000000000000   0.999177590488029
   2.810000000000000   0.999422598780808
   2.860000000000000   0.999606134378502
   2.910000000000000   0.999744826660915
   2.960000000000000   0.999850600317113
   3.010000000000000   0.999932036718338
};
\addplot[ color = blue] table{%
   2.010000000000000   0.899087598039155
   2.060000000000000   0.919150508446022
   2.110000000000000   0.936020759513816
   2.160000000000000   0.949917483679236
   2.210000000000000   0.961155241420844
   2.260000000000000   0.970098192597894
   2.310000000000000   0.977120012811324
   2.360000000000000   0.982574258804246
   2.410000000000000   0.986776207262039
   2.460000000000000   0.989994639253832
   2.510000000000000   0.992450890590414
   2.560000000000000   0.994322405493233
   2.610000000000000   0.995748537962847
   2.660000000000000   0.996837030720099
   2.710000000000000   0.997670226444288
   2.760000000000000   0.998310537253536
   2.810000000000000   0.998805007705754
   2.860000000000000   0.999188981868050
   2.910000000000000   0.999488976336654
   2.960000000000000   0.999724886829804
   3.010000000000000   0.999911662996148
};
\addplot[ color = blue] table{%
   2.010000000000000   0.882950070940828
   2.060000000000000   0.903449532142468
   2.110000000000000   0.921062887762581
   2.160000000000000   0.935977869987958
   2.210000000000000   0.948443343357862
   2.260000000000000   0.958742260984164
   2.310000000000000   0.967167835353517
   2.360000000000000   0.974004903350795
   2.410000000000000   0.979517106146872
   2.460000000000000   0.983939467835437
   2.510000000000000   0.987475360812692
   2.560000000000000   0.990296642455081
   2.610000000000000   0.992545817866853
   2.660000000000000   0.994339295836241
   2.710000000000000   0.995771058899079
   2.760000000000000   0.996916302860459
   2.810000000000000   0.997834788345549
   2.860000000000000   0.998573780566671
   2.910000000000000   0.999170542447532
   2.960000000000000   0.999654396883074
   3.010000000000000   1.000048402581453
};

\addplot[domain=\xmin:3.05,samples=100, color=blue, dashed] {1};
\end{axis}
\end{tikzpicture}
\caption{The curves $q\mapsto \beta_N(q)$ correspond to the exponent 
of the approximated power-law behavior of the MSD as a function of the 
trapping time distribution's exponent. We show these curves for total 
observation times $N=2^{13}, 2^{15}$ and $2^{17}$. The curves approaches, 
as $N\to\infty$, the asymptotic exponent $q\mapsto\beta(q)=1$ (dashed line).}
\label{fig:exponent>1}
\end{figure}
\end{center}

\subsection{Central Limit Theorem}\label{subsec:CLT}
Let us now focus on the distribution $\mathbb{P}(X_t=z)$. According 
to what we deduced in Subsection~\ref{subsec:subordinated},
\[
\mathbb{P}(X_t=z)
=\sum_{n=0}^{t-1}\mathbb{P}(N_t=n) \mathbb{P}(S_n=z)
                                             \nonumber\\
= \sum_{n=0}^{t-1}2^{-n}
    \mathbb{P}(N_t=n)\left(\begin{array}{c}n          \\
                            \frac{n+z}{2}\end{array}
                      \right).
\]
On the other hand, $N_t$ can also be related to a sum of i.i.d. 
trapping times. Let us recall that, $N_t:=\#\{0\leq s \leq t:\ T_s=0\}$. 
It can be easily verified that, for any finite collection 
$t_1<t_2<\cdots t_n$ and corresponding trapping times 
$\tau_1,\tau_1\ldots,\tau_n\geq 0$, we have
\[
\mathbb{P}(T_{t_1+1}=\tau_1,\ldots,T_{t_n+1}=
                    \tau_n|\, T_{t_1}=\cdots =T_{t_n}=0)
                    =
   \prod_{k=1}^n\mathbb{P}(T_{t_k+1}=\tau_k|\, T_{t_k}=0)
                    =\prod_{k=1}^np(\tau_k).
\]
From this it follows that 
\begin{equation}\label{eq:NumberOfHits} 
N_t=\max \left\{ n\geq 0:\, \sum_{k=1}^n  (T_k+1)
\leq t,\right\}
\end{equation}
where the random variables $T_k$ are independent copies of the trapping 
time $T$, which satisfies $\mathbb{P}(T=\tau)=p(\tau)$. 
Equation~\eqref{eq:subordinatedrw} suggests that the shape of the 
probability distribution $\mathbb{P}(X_t=z)$ strongly depends on the 
behavior of the distribution of the sum $\sum_{k=0}^n(T_k+1)$. We  
now consider the case where $\mathbb{E}(T^\alpha)<\infty$ for some 
$\alpha > 1$. In this case the distribution of $N_t$ concentrates around 
its mean, and this ensures the following.

\begin{theorem}[Central Limit Theorem for $\alpha > 1$]
\label{teo:CentralLimitTheorem}
Let us assume that\, $\mathbb{E}(T^\alpha)<\infty$ for some 
$\alpha\in (1,2]$ and define $C_T:=\mathbb{E}(|T-\mathbb{E}(T)|^\alpha)$ 
and $\mu=\mathbb{E}(T)+1$. For each $C>8\,C_T/\mu+\sqrt{2\,\mu/\pi}$, 
there exists $t^*=t^*(C)\in \mathbb{N}$ such that for each bounded 
interval $I\subset \mathbb{R}$ and each $t\geq t^*$, we have
\[
\left|\mathbb{P}
\left(X_t\in \sqrt{\frac{t}{\mu}}\, I\right)
    -\frac{1}{\sqrt{2\pi}}\int_{I}e^{-x^2/2}dx\right|\leq
C\,t^{\frac{1-\alpha}{1+\alpha}}
\]
\end{theorem}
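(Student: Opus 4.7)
The plan is to exploit the subordination representation of Subsection~\ref{subsec:subordinated}. Writing $X_t = S_{N_{t-1}}$ with $S_n = \sum_{k=1}^n \Delta_k$ a simple $\pm 1$ random walk independent of the renewal counter $N_{t-1}$, and conditioning on the value of $N_{t-1}$,
\[
\mathbb{P}\!\left(X_t \in \sqrt{t/\mu}\, I\right) = \sum_{n=0}^{t-1} \mathbb{P}(N_{t-1} = n)\,\mathbb{P}\!\left(S_n \in \sqrt{t/\mu}\, I\right).
\]
The strategy is to show that $N_{t-1}$ concentrates on values $n \approx t/\mu$, and that on this set $\mathbb{P}(S_n \in \sqrt{t/\mu}\,I)$ is approximated, by the classical CLT for the $\pm 1$ walk, by the standard normal integral over $I$.

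Two quantitative ingredients are needed. First, a concentration estimate for the renewal counter: from~\eqref{eq:NumberOfHits}, $\{N_{t-1} \geq n\} = \{\sum_{k=1}^n (T_k + 1) \leq t-1\}$, so deviations of $N_{t-1}$ from $t/\mu$ translate into deviations of an i.i.d.\ sum from its mean $n\mu$. Since $\mathbb{E}|T - \mathbb{E} T|^\alpha = C_T < \infty$, the von Bahr--Esseen inequality yields $\mathbb{E}|\sum_{k=1}^n (T_k - \mathbb{E} T)|^\alpha \leq 2n C_T$, and Markov then gives $\mathbb{P}(|N_{t-1} - t/\mu| > \delta) \leq K\,tC_T/(\mu^{\alpha+1} \delta^\alpha)$ for a numerical constant $K$ and $t$ large. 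Second, a quantitative CLT for the $\pm 1$ walk: by Berry--Esseen, for any interval $J$ one has $|\mathbb{P}(S_n/\sqrt{n} \in J) - (2\pi)^{-1/2}\int_J e^{-x^2/2}dx| \leq c/\sqrt{n}$, with an explicit $c$ because $\mathbb{E}|\Delta|^3 = 1$.

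With these in hand, I would split the sum above according to whether $|n - t/\mu| \leq \delta$ (the \emph{bulk}) or not (the \emph{tail}). The tail contribution is bounded directly by the concentration estimate. In the bulk, I rewrite $\mathbb{P}(S_n \in \sqrt{t/\mu}\, I) = \mathbb{P}(S_n/\sqrt{n} \in \sqrt{t/(\mu n)}\, I)$: Berry--Esseen replaces this by $(2\pi)^{-1/2}\int_{\sqrt{t/(\mu n)} I} e^{-x^2/2}\,dx$ at cost $O(\sqrt{\mu/t})$, and the Gaussian integral over the rescaled interval differs from the one over $I$ by $O(|1 - \sqrt{t/(\mu n)}|) = O(\delta\mu/t)$, uniformly in the bounded interval $I$ (using the integrability of $y\,\phi(y)$ so that the bound is independent of the endpoints of $I$). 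Combining,
\[
\mathbb{P}\!\left(X_t \in \sqrt{t/\mu}\, I\right) = \frac{1}{\sqrt{2\pi}}\int_I e^{-x^2/2}\,dx + O\!\left(\frac{tC_T}{\mu^{\alpha+1} \delta^\alpha} + \frac{\delta\mu}{t} + \sqrt{\mu/t}\right).
\]

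Finally, I optimize in $\delta$: balancing the tail error $tC_T/(\mu^{\alpha+1}\delta^\alpha)$ against the rescaling error $\delta\mu/t$ forces $\delta \asymp (t^2 C_T/\mu^{\alpha+2})^{1/(\alpha+1)}$ and yields a combined error of order $t^{(1-\alpha)/(\alpha+1)}$, which for $\alpha \leq 2$ dominates the Berry--Esseen term $\sqrt{\mu/t}$. Tracking the numerical constants through both inequalities produces the stated prefactor $C = 8C_T/\mu + \sqrt{2\mu/\pi}$, valid for $t$ exceeding a threshold $t^*(C)$. The main obstacle is precisely this bookkeeping: each ingredient is standard, but obtaining a single clean constant matching the statement requires care in the interval-rescaling step of the bulk estimate and in converting deviations of $N_{t-1}$ into deviations of the underlying renewal sum; the two $\alpha$-dependent error terms must be made to balance exactly so that the $\mu$ dependence assembles into $8C_T/\mu$ and $\sqrt{2\mu/\pi}$.
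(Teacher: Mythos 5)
Your plan is correct and follows essentially the same route as the paper's proof: subordination $X_t=S_{N_{t-1}}$, a bulk/tail split of the sum over $n$ with the tail controlled by von Bahr--Esseen applied to the renewal sum $\sum_k(T_k+1)$, the bulk handled by Berry--Esseen for the $\pm1$ walk plus the Gaussian interval-rescaling estimate (where, as you note, one needs $\sup_y|y|e^{-y^2/2}<\infty$ for uniformity in $I$), and the same optimization $\delta\asymp t^{2/(\alpha+1)}$ (the paper's window half-width $d\asymp t^\beta$ with $\beta=2/(\alpha+1)$) producing the rate $t^{(1-\alpha)/(1+\alpha)}$. The only differences are notational (you parametrize the deviation of $N$ rather than of the renewal sum), so nothing further is needed beyond the constant bookkeeping you already flag.
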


\medskip \noindent 
The proof of this result relies on two classical inequalities. On the one 
hand the Berry-Esseen inequality (as it appears in~\cite{2007Hip&Mattner}), 
which gives the rate of convergence in the de Moivre-Laplace theorem. 
According to it,
\begin{equation}\label{eq:berryessen}
\sup_{x\in\mathbb{R}}\left|2^{-n}\sum_{z\leq x}
  \left(\begin{array}{c}n\\ \frac{n+z}{2}\end{array}\right)
   -\frac{1}{\sqrt{2\pi}}\int_{-\infty}^{x/\sqrt{n}} 
   e^{-z^2/2}\, dz\right|<\frac{1}{\sqrt{2\pi\,n}}
\end{equation}
for all $n\in \mathbb{N}$. On the other hand, we use a generalization 
of Chebyshev's inequality, due to von Bahr and Esseen. It states that 
if $X_k$, $1\leq k\leq n$ are i.i.d. random variables such that 
$\mathbb{E}(X_k)=0$ and $\mathbb{E}(|X_k|^\alpha):=C_\alpha<\infty$, 
for some $\alpha\in [1,2]$, then 
\begin{equation}\label{eq:vanBahrEsseen}
\mathbb{P}\left(\left|\sum_{k=0}^nX_k\right| > \epsilon\right)
\leq \frac{2\, n\, C_\alpha}{\epsilon^\alpha}.
\end{equation}
The inequality was established in~\cite{1965vonBahr&Esseen}. 

\noindent
\begin{proof}
It follows from Eq.~\eqref{eq:NumberOfHits} that 
\begin{align}
\mathbb{P}\left(N_t\leq n\right)
   &=\mathbb{P}\left(\sum_{k=1}^{n+1}(T_k+1)\geq t+1
              \right),         \label{eqs:psum1}\\
\mathbb{P}\left(N_t\geq n\right)
   &=1-\mathbb{P}\left(N_t\leq n-1\right) \nonumber\\
   &=1-\mathbb{P}\left(\sum_{k=1}^{n}(T_k+1)\geq t+1\right)
 =\mathbb{P}\left(\sum_{k=1}^{n}(T_k+1)\leq t\right).
\label{eqs:psum2}
\end{align}
for each $n\in\mathbb{N}$.

\medskip
\noindent
Let $n_1:=(t+1-d_1)/\mu-1$ and $n_2:=(t+d_2)/\mu-1$ be numbers, with $d_1,d_2$ 
yet to be specified. Let $d=\min(d_1,d_2)$, then, taking~\eqref{eqs:psum1} 
and~\eqref{eqs:psum2} into account, we obtain
\begin{align*}
\mathbb{P}\left(N_t\notin (n_1,n_2)\right) 
&\leq \mathbb{P}
   \left(\left|\sum_{k=1}^{n_1+1}(T_k+1)-(n_1+1)\mu
                               \right|\geq d_1\right) +  
       \mathbb{P}
   \left(\left|\sum_{k=1}^{n_2+1}(T_k+1)-(n_2+1)\mu
                             \right|\geq d_2\right)\\
&\leq \frac{2(n_1+n_2+2)\, C_T}{\mu\,d^\alpha} 
     =\frac{2(2\,t+1+d_2-d_1)\, C_T}{\mu\,d^\alpha}
\end{align*} 
for each $t\in \mathbb{N}$. From now on we will assume that 
$t > (1+\mu)^{1/\beta}\,2^{-1/\beta}$, which ensures that $n_1 < n_2$. 
Now, by taking $\beta \in (1/\alpha,1)$ and $d_1, d_2$ such that 
$|t^\beta-d_1|,|t^\beta-d_2|\leq \mu$, we obtain
\[
\mathbb{P}\left(N_t\notin (n_1,n_2)\right)
        \leq \frac{2(2+(1+2\mu)t^{-1})\, C_T
        }{\mu\,(1-\mu\,t^{-\beta})^{\alpha}}\,t^{1-\alpha\,\beta}.
\]
From now on, $g_1(t):=(1+1/2(1+2\mu)t^{-1})/(1-\mu\,t^{-\beta})^{\alpha}$. 
Using the previous inequality and Berry-Esseen's inequality, it follows that
\begin{align}\label{eq:concentration}
\left|
\mathbb{P}\left(X_t\in \sqrt{\frac{t}{\mu}}\, I\right) 
  - \frac{1}{\sqrt{2\pi}} \sum_{n=n_1}^{n_2} 
  \mathbb{P}\left(N_t=n\right)
  \int_{\sqrt{\frac{t}{\mu\,n}}\, I} e^{-z^2/2}\,dz \right|                 
&\leq g_1(t)\,\frac{4\,C_T}{\mu}\,t^{1-\alpha\,\beta}
         +\frac{2}{\sqrt{2\pi\,n_1}}   \nonumber\\
&=g_1(t)\,\frac{4\,C_T}{\mu}\,t^{1-\alpha\,\beta}
        +g_2(t)\,\sqrt{\frac{2\,\mu}{\pi\,t}}
\end{align}
where $g_2(t):=1/\sqrt{1-t^{\beta-1}-(2\mu-1)t^{-1}}$. 

\medskip\noindent 
For $n\in (n_1,n_2)$ we have
\begin{align*}
\left|\int_{\sqrt{\frac{t}{\mu\,n}}\, I} e^{-z^2/2}\,dz
              -\int_{I} e^{-z^2/2}\,dz \right|
&\leq 
\int_{\sqrt{\frac{t}{\mu\,n}}\,I\,\Delta\, I}e^{-z^2/2}\,dz 
\leq \sqrt{\frac{t}{n_1\mu}}
          \left(1-\sqrt{\frac{n_1}{n_2}}\right)     \\
&\leq \frac{1}{\sqrt{1-t^{\beta-1}-(2\mu-1)t^{-1}}}
 \left(1-\sqrt{\frac{1-t^{\beta-1}-(2\mu-1)t^{-1}
             }{1+t^{\beta-1}+2\mu\,t^{-1} }}\right) \\
&\leq 2(t^{\beta-1}+(4\mu-1)\,t^{-1}),                              
\end{align*}
provided 
$t\geq t_1:= \min\left\{t\in{\mathbb N}:\ 
(t^{\beta-1}+(4\mu-1)\,t^{-1})(t^{\beta-1}+(2\mu-1)t^{-1}) \leq 1\right\}$. 
Finally, using this in~\eqref{eq:concentration}, and taking into account that 
$\mathbb{P}\left(N_t\notin (n_1, n_2)\right)\leq 
                    g_1(t)\,(4\,C_T/\mu)\,t^{1-\alpha\,\beta}$, 
we finally obtain
\begin{align*}
\left|
\mathbb{P}\left(X_t\in \sqrt{\frac{t}{\mu}}\, I\right)
        -\int_{I} e^{-z^2/2}\,dz \right|
&\leq g_1(t)\,\frac{8\,C_T}{\mu}\,t^{1-\alpha\,\beta}+
           g_2(t)\,\sqrt{\frac{2\mu}{\pi\,t}}+g_3(t)\,
                      \sqrt{\frac{2}{\pi}}\,t^{\beta-1},
\end{align*}
with $g_3(t):=(1+(4\mu-1)\,t^{-\beta})$. Now, optimizing 
$\beta \in (1/\alpha,1)$ we obtain
\[
\left|\mathbb{P}\left(X_t\in \sqrt{\frac{t}{\mu}}\, I\right)
           -\int_{I} e^{-z^2/2}\,dz \right|
\leq \left(g_1(t)\,\frac{8\,C_T}{\mu}
+g_2(t)\, 
\sqrt{\frac{2\mu}{\pi}}\,t^{-\frac{3-\alpha}{2(\alpha+1)}}
+g_3(t)\,\sqrt{\frac{2}{\pi}}
\right)\,t^{\frac{1-\alpha}{1+\alpha}}.
\]
The theorem follows by taking into account that $\max(g_1(t),g_2(t),g_3(t))\to\infty$ when $t\to\infty$.
\end{proof}

\bigskip
\begin{remark}\label{rem:chernoff}
If the distribution of trapping times has exponential moments, i.e., if $\mathbb{E}(\exp(q\,T))<\infty$ for some $q\geq 0$, then the upper 
bound on the rate of convergence can be improved by replacing van 
Bahr-Esseen's inequality by a tighter inequality, using a Chernoff's 
bound. In that case we can take $|\mu\,n_1-t|$ and 
$|\mu\,n_2-t|$ of order $\sqrt{t\,\log(t)}$, which gives a rate of 
convergence of the order of $1/\sqrt{t}$. This is the case of the 
exponential trapping time distribution $p(\tau)=(1-\lambda)\,\lambda^\tau$.
\end{remark}

\bigskip\noindent 
The case $\mathbb{E}(T)<\infty$ and $\mathbb{E}(T^\alpha)=\infty$ for 
each $\alpha>1$ has to be treated separately. In this case we are unable 
to give a general explicit bound for the rate of convergence towards 
normality, since we can no longer use an explicit concentration 
inequality. In this case our proof relies on the attractiveness of 
the Cauchy distribution and applies only to trapping time distributions 
of the kind $p(\tau)=\tau^{-2}\,L(\tau)$ with $L(\tau)$ a slowly 
varying function. 

\begin{theorem}[CTL for $\alpha=1$]
\label{teo:CentralLimitTheoremOne}
Let $\mathbb{E}(T)<\infty$ and define $\mu=\mathbb{E}(T)+1$
as above. If in addition $\lim_{\tau\to\infty}
\mathbb{P}(T \geq t\, \tau)/\mathbb{P}(T \geq \tau)=t^{-1}$
for each $t\in \mathbb{R}^+$, then  
\[
\lim_{t\to\infty}
\left|\mathbb{P}
\left(X_t\in \sqrt{ \frac{t}{\mu}}\, I\right)
    -\frac{1}{\sqrt{2\pi}}\int_{I}e^{-x^2/2}dx\right|=0.
\]
\end{theorem}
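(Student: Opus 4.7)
The plan is to mimic the proof of Theorem~\ref{teo:CentralLimitTheorem}, with the quantitative van Bahr-Esseen concentration bound (which here fails because $\mathbb{E}(T^\alpha)=\infty$ for every $\alpha>1$) replaced by a purely qualitative concentration of the renewal count $N_t$ around $t/\mu$. I start from the subordination identity
\[
\mathbb{P}\!\left(X_t\in\sqrt{t/\mu}\,I\right)
=\sum_{n=0}^{t-1}\mathbb{P}(N_{t-1}=n)\,\mathbb{P}\!\left(S_n\in\sqrt{t/\mu}\,I\right),
\]
with $S_n=\sum_{k=1}^n\Delta_k$ the simple binary random walk, and split the sum according to whether $n$ lies in the window $J_\varepsilon:=((1-\varepsilon)t/\mu,(1+\varepsilon)t/\mu)$ or not.

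The central step is the soft concentration $\lim_{t\to\infty}\mathbb{P}(N_t\notin J_\varepsilon)=0$ for every $\varepsilon>0$. Using \eqref{eqs:psum1}--\eqref{eqs:psum2}, this reduces to
\[
\mathbb{P}\!\left(\left|\sum_{k=1}^{m}(T_k+1)-m\mu\right|\geq \varepsilon\mu m\right)\xrightarrow[m\to\infty]{}0,
\]
which is the weak law of large numbers applied to the i.i.d.\ sequence $(T_k+1)$ and requires only $\mathbb{E}(T)<\infty$. The regular variation hypothesis enters more subtly: by placing $T$ in the domain of attraction of the Cauchy law, it provides, through the generalized CLT, a refined description of the fluctuations of $\sum_{k=1}^m(T_k+1)-m\mu$ on the slowly varying scale $mL(m)$---this is the ``attractiveness of the Cauchy distribution'' alluded to before the statement of the theorem.

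Once concentration is at hand, the contribution of indices $n\notin J_\varepsilon$ to the subordination sum is negligible, dominated by $\mathbb{P}(N_{t-1}\notin J_\varepsilon)\to 0$. For $n\in J_\varepsilon$ the Berry-Esseen inequality~\eqref{eq:berryessen} gives
\[
\left|\mathbb{P}\!\left(S_n\in\sqrt{t/\mu}\,I\right)-\frac{1}{\sqrt{2\pi}}\int_{\sqrt{t/(n\mu)}\,I}e^{-z^2/2}\,dz\right|\leq \frac{1}{\sqrt{2\pi n}}=O(t^{-1/2})
\]
uniformly on $J_\varepsilon$, while the scaling factor $\sqrt{t/(n\mu)}$ lies in $(1/\sqrt{1+\varepsilon},1/\sqrt{1-\varepsilon})$, so by continuity of the Gaussian integral in its endpoints there is a modulus $\omega(\varepsilon)\to 0$ with $\sup_{n\in J_\varepsilon}\left|\int_{\sqrt{t/(n\mu)}\,I}e^{-z^2/2}\,dz-\int_I e^{-z^2/2}\,dz\right|\leq \omega(\varepsilon)$. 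Letting first $t\to\infty$ and then $\varepsilon\to 0$ yields the conclusion.

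The main obstacle is the absence of a quantitative concentration inequality for the partial sums $\sum_{k=1}^m (T_k+1)$: without van Bahr-Esseen one is forced to pass through weak convergence, which is precisely why the statement of the theorem is only a bare limit and not an explicit estimate. Producing a rate would require pinning down, uniformly in $m$, the slowly varying function $L$ governing the tail $\mathbb{P}(T\geq\tau)\sim L(\tau)/\tau$, something that the hypotheses of the theorem deliberately leave unspecified.
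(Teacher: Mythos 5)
Your proof is correct, but it takes a genuinely different and more elementary route than the paper. The paper invokes Gnedenko's theorem to place $\sum_k T_k$, suitably normalized by a sequence $a_n$ with $n/a_n$ slowly varying, in the domain of attraction of the Cauchy law; it then defines a window $(n_1,n_2)$ around $t/\mu$ through a carefully constructed slowly varying diverging function $d(t)=\sqrt{h([t/\mu])}$, bounds $\mathbb{P}(N_t\notin(n_1,n_2))$ by the Cauchy tail $\tfrac12-\arctan(d(t))/\pi$, and proves $n_2/n_1\to 1$ before running the Berry--Esseen argument. You replace all of this by the weak law of large numbers applied to the i.i.d.\ sequence $(T_k+1)$, which gives the soft concentration $\mathbb{P}(N_t\notin((1-\varepsilon)t/\mu,(1+\varepsilon)t/\mu))\to 0$ on a window of fixed relative width $\varepsilon$, followed by the standard $t\to\infty$ then $\varepsilon\to 0$ double limit; the remaining Berry--Esseen and modulus-of-continuity steps match the paper's. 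Your approach is shorter and actually establishes the stated (rate-free) limit under the hypothesis $\mathbb{E}(T)<\infty$ alone; be aware, though, that your middle paragraph about where the regular variation hypothesis ``enters more subtly'' is a rationalization rather than a step of your argument --- your proof never uses that hypothesis. What the paper's heavier machinery buys is quantitative information: the Cauchy attraction pins the concentration window down to relative width $1/d(t)$ with $d$ slowly varying, which is what justifies the paper's later assertion (in the summary table) that the convergence is slower than any power law; your $\varepsilon$-window argument cannot see this.
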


\medskip\begin{remark}\label{rem:alpha=1}
Note that 
$\lim_{\tau\to\infty} 
\mathbb{P}(T \geq t\, \tau)/\mathbb{P}(T \geq \tau)=t^{-1}$ for all 
$t\in \mathbb{R}^+$ implies that $\mathbb{E}(T^\alpha)=\infty$ for each 
$\alpha>1$. For this it is enough to notice that if 
$\sum_{\tau\in\mathbb{N}}\tau^{1+\epsilon}p(\tau) < \infty$ for some 
$\epsilon > 0$, then 
$\sum_{\tau\in \mathbb{N}}\tau^{\epsilon} \, 
\mathbb{P}(T\geq \tau)< \infty$, and necessarily 
\[
\liminf_{\tau\to\infty}
\frac{(t\,\tau)^{\epsilon}\,\mathbb{P}(T\geq t\,\tau)
    }{\tau^{\epsilon}\,\mathbb{P}(T\geq \tau)}= 
\liminf_{\tau\to\infty}
\frac{t^{\epsilon}\,\mathbb{P}(T\geq t\,\tau)
    }{\mathbb{P}(T\geq \tau)}=c\in (0,1),
\]
for each $t\in\mathbb{R}^+$, and therefore 
$\liminf_{\tau\to\infty}
\mathbb{P}(T \geq t\, \tau)/\mathbb{P}(T \geq \tau)=
c\,t^{-\epsilon}$ for all $t\in \mathbb{R}^+$. 
\end{remark}

\begin{proof} As we mentioned above, the proof of this result relies 
on the attractiveness of the Cauchy distribution. Indeed, the condition 
$\lim_{n\to\infty}\mathbb{P}(T \geq t\, n)/\mathbb{P}(T \geq n)=t^{-1}$ 
ensures that $T$ belongs to the domain of attraction of a stable 
distribution with index $\alpha=1$. Furthermore, since $T>0$, then 
the stable limiting distribution, which necessarily has characteristic 
function $t\mapsto \exp(-|t|)$, is nothing but the Cauchy distribution, 
$x\mapsto \pi/(1+x^2)$ (see~\cite{2000Geluk&Haann} for instance). 
Therefore, according to Gnedenko's Theorem~\cite{1968Gnedenko&Kolmogorov}, 
there are sequences $\{a_n\}_{n\in\mathbb{N}}$ in $\mathbb{N}$ and 
$\{b_n\}_{n\in\mathbb{N}}$ in $\mathbb{R}$ such that 
$\sum_{k=1}^n\, T_k/a_n-b_n$ converges in law to 
$x \mapsto \pi^{-1}\int_{t\leq x}dt/(1+t^2)$. Furthermore, we can take 
$a_n:=\min\{\tau\in\mathbb{N}:\, 
             \mathbb{P}(T > \tau)\leq 1/n\}$ 
and 
$b_n=n\,(\sum_{\tau \leq a_n}\tau\,p(\tau))/a_n$ (see for instance 
Theorem 3.7.4 in~\cite{2010Durrett}). It follows that there exists 
a positive sequence $\{\epsilon_n\}_{n\in\mathbb{N}}$ converging to 
zero, such that for each $x\in \mathbb{R}$ and $n\in\mathbb{N}$ we have
\[
\mathbb{P}\left(\frac{\sum_{k=1}^n\, T_k}{a_n}
   -b_n \leq t \right)
   \leq \frac{1}{\pi}\int_{-\infty}^t\frac{dx}{1+x^2}
   +\epsilon_n.
\]
Let $t\mapsto d(t)$ be a positive diverging function, yet to be specified, 
and define
\[
  n_1:=\max\{n\leq t/\mu:\,t > n+a_{n}\,(b_{n}+d(t))\},
       \hskip 10pt
  n_2:=\min\{n\geq t/\mu:\,t < n+a_{n+1}\,(b_{n+1}-d(t))\}.
\]
With this we have
\begin{align*}
\mathbb{P}\left(N_t\leq n_1\right)=
\mathbb{P}\left(\sum_{k=1}^{n_1}\, T_k \geq t-n_1 \right)
&\leq\frac{1}{\pi}
   \int_{d(t)}^\infty\frac{dx}{1+x^2}+\epsilon_{n_1},\\
&=\frac{1}{2}-\frac{\arctan(d(t))}{\pi}+\epsilon_{n_1}\\
   \mathbb{P}\left(N_t\geq n_2\right)=
   \mathbb{P}\left(\sum_{k=1}^{n_2+1}\, T_k\leq t-n_2\right)
&\leq\frac{1}{\pi}
   \int_{-\infty}^{d(t)}\frac{dx}{1+x^2}+\epsilon_{n_2+1}\\
&=\frac{1}{\pi}\int_{d(t)}^\infty\frac{dx}{1+x^2}+
   \epsilon_{n_2+1}\\
&=\frac{1}{2}-\frac{\arctan(d(t))}{\pi}+\epsilon_{n_2+1}.
\end{align*}
Therefore 
\begin{equation}\label{eq:arctan}
\mathbb{P}\left(N_t\notin (n_1,n_2)\right)
\leq 2\left(\epsilon_{n_1}+
      \frac{1}{2}-\frac{\arctan(d(t))}{\pi}
       \right)\leq 
  2\left(\epsilon_{n_1}+\frac{2}{d(t)}\right),
\end{equation}
provided $d(t)$ is sufficiently large. Following exactly the same 
computations as in the proof of Theorem~\ref{teo:CentralLimitTheorem}, 
we obtain
\begin{align*}
\left|
\mathbb{P}\left(X_t\in \sqrt{\frac{t}{\mu}}\, I\right)
     -\int_{I} e^{-z^2/2}\,dz \right|
\leq 4\left(\epsilon_{n_1}+\frac{2}{d(t)}\right)+
\sqrt{\frac{t}{n_1\mu}}
    \left(1-\sqrt{\frac{n_1}{n_2}}\right)
                   +\frac{2}{\sqrt{2\pi\,n_1}}
\end{align*}
provided $\mu\, n_1\leq t \leq \mu\, n_2$. 

\medskip\noindent To complete the proof we need to find a diverging 
function $t\mapsto d(t)$ such that $n_2/n_1\rightarrow 1$ as 
$t\to\infty$. For this, consider $g(\tau):=\tau\,\mathbb{P}(\tau)$. 
Since $g(\tau) \leq \sum_{\tau'\geq\tau}\tau\,p(\tau')$ and 
$\mathbb{E}(T)<\infty$, then $\lim_{\tau\to\infty}g(\tau)=0$. 
On the other hand, by the hypothesis on $\mathbb{P}(T\geq \tau)$, 
for each $\epsilon > 0$ the function 
$\tau\mapsto g_\epsilon(\tau):=\tau^{\epsilon}\,g(\tau)$ is regularly 
varying of order $\epsilon$, i.e.
$\lim_{\tau\to\infty} g_\epsilon(t\,\tau)/g_\epsilon(\tau)=t^{\epsilon}$ 
for each $t\in \mathbb{R}^+$. Therefore $g_\epsilon(\tau)$ diverges for 
all $\epsilon >0$, which proves that $g(\tau)$ converges to zero slower 
than any power-law. Now, since by definition 
$g(n)\geq a_n/n\geq g(n+1)-\mathbb{P}(T\geq a_n+1)$, then 
$\{n/a_n\}_{n\in\mathbb{N}}$ is a diverging sequence growing slower 
than any power-law, and indeed $n\mapsto n/a_n$ is a slowly varying 
function, i.e. a regularly varying function of order zero. Taking all 
this into account, it follows that the function
\[
 n\mapsto h(n):=\min\left\{\frac{\tau}{a_{\tau}}:\, 
      \tau\geq n\right\},
\]
diverges monotonously and slower than any power-law. Furthermore, 
it is a slowly varying function. Finally, let  $d(t):=\sqrt{h([t/\mu])}$, 
which is monotonously diverging and slow varying as well. To conclude, 
notice that from the definition of $n_1$, $n_2$, and $h(t)$, we have on 
the one hand,
\[
n_1+1\geq \frac{t}{\mu}-\frac{a_{n_1+1}\,d(t)}{\mu}
   \geq \frac{t}{\mu}-\frac{a_{[t/\mu]+1}\,d(t)}{\mu}
   \geq \frac{t}{\mu}-\frac{(t/\mu+1)\,d(t)}{\mu\,h([t/\mu]+1])}
   \geq \frac{t}{\mu}-\frac{(t/\mu+1)}{\mu\,d(t)}
\]
and on the other hand
\begin{align*}
n_2-1\leq \frac{t}{\mu}+\frac{a_{n_2}\,d(t)}{\mu}+\frac{n_2\, 
  \sum_{\tau > n_2} \tau\,p(\tau)}{\mu}
 &\leq \frac{t}{\mu}+\frac{n_2\,d(t)}{\mu\,h(n_2)}+\frac{n_2\, 
     \sum_{\tau > n_2} \tau\,p(\tau)}{\mu}\\
 &\leq \frac{t}{\mu}+\frac{n_2\,d(t)}{\mu\,h([t/\mu])}
    +\frac{n_2\, \sum_{\tau > t/\mu} \tau\,p(\tau)}{\mu},
\end{align*}
and therefore
$n_2\left(1-(\mu\,d(t))^{-1}-
   \mu^{-1}\sum_{\tau > t/\mu} \tau\,p(\tau)
                               \right)-1\leq t/\mu$.
Hence, 
\[
1\leq \frac{n_2}{n_1}
\leq  \frac{(t/\mu+1)
\left(1-(\mu\,d(t))^{-1}-\mu^{-1}\sum_{\tau > t/\mu}\tau\,p(\tau)
\right)^{-1}
 }{t/\mu\left(1-\left(1+\mu\,t^{-1}\right)
              \left(d(t)\,\mu\right)^{-1} \right) }
 \rightarrow 1, \text{ as } t\to\infty,
\] 
and the proof is done.
\end{proof}

\bigskip
\section{Sub-diffusive regime}\label{sec:subdiffusive}
\noindent 
In this section we study trapping time distributions leading to a 
sub-normal diffusion. First, we will analyze the sub-linear growth 
of the MSD in the case of a trapping time distribution with diverging 
first moment. In the particular case of power-law distributed trapping 
times, we are able to determine the behavior of this sub-linear growth, 
which turns out to be a power-law as well. Then, we study the 
concentration of the number of steps made by the trapped random walker, 
for the larger class of distributions for which 
$\mathbb{E}(T^\alpha)<\infty$, for some $\alpha < 1$.

\subsection{Sub-diffusion via MSD}
\label{subsec:subnormalMSD} 
Let us consider power-law decaying probability distributions for which 
the mean trapping time diverges. In this case we lose the asymptotic 
normal behavior since the MSD now follows a sub-linear power-law growth. 
Let us start by establishing the sub-linearity of the MSD with respect 
to time, which holds in all cases when $\mathbb{E}(T)=\infty$.

\medskip
\begin{proposition}[Sub-diffusion]\label{prop:subdiffusion}
Suppose that the trapped random walk satisfies the initial condition~\eqref{eq:initialcondition}. If the trapping-time has infinite 
mean, then $\lim_{t\to\infty} \sigma_t^2/t=0$.
\end{proposition}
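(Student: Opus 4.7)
The plan is to exploit the subordinated random walk representation developed in Subsection~\ref{subsec:subordinated}. From the decomposition $X_{t+1}=\sum_{n}\Delta_{n}$, where the number of summands is the escape count $N_{t}$, and the fact (established there) that the $\Delta_{n}$ are i.i.d.\ Rademacher variables independent of $N_{t}$, a one-line application of Wald's second-moment identity gives
\[
\sigma_{t+1}^{2}=\mathbb{E}(N_{t}).
\]
Thus it suffices to show that $\mathbb{E}(N_{t})/t\to 0$ as $t\to\infty$ whenever $\mathbb{E}(T)=\infty$.

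Recall from~\eqref{eq:NumberOfHits} that $N_{t}$ is precisely the renewal count associated with the i.i.d.\ inter-arrival sequence $\{T_{k}+1\}_{k\geq 1}$, so the required statement is the elementary renewal theorem in the infinite-mean regime. To keep the argument self-contained and in the spirit of the paper, I would prove it by truncation. Fix a threshold $M\in\mathbb{N}$ and set $T_{k}^{(M)}:=\min(T_{k},M)$. Because $T_{k}^{(M)}\leq T_{k}$, the truncated renewal count
\[
N_{t}^{(M)}:=\max\Bigl\{n\geq 0:\ \sum_{k=1}^{n}\bigl(T_{k}^{(M)}+1\bigr)\leq t\Bigr\}
\]
dominates $N_{t}$ pointwise. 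For this truncated process the mean inter-arrival $\mathbb{E}(T^{(M)})+1$ is finite, so the SLLN applied to $\{T_{k}^{(M)}+1\}$ yields $N_{t}^{(M)}/t\to 1/(\mathbb{E}(T^{(M)})+1)$ almost surely. Since each inter-arrival is at least $1$, we have $N_{t}^{(M)}\leq t$, and dominated convergence upgrades the a.s.\ convergence to convergence in mean. Consequently
\[
\limsup_{t\to\infty}\frac{\mathbb{E}(N_{t})}{t}
\leq \lim_{t\to\infty}\frac{\mathbb{E}(N_{t}^{(M)})}{t}
=\frac{1}{\mathbb{E}(T^{(M)})+1}.
\]
Letting $M\to\infty$, monotone convergence gives $\mathbb{E}(T^{(M)})\uparrow \mathbb{E}(T)=\infty$, so the right-hand side vanishes, which proves $\sigma_{t}^{2}/t\to 0$.

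I do not expect a serious obstacle: the identity $\sigma_{t+1}^{2}=\mathbb{E}(N_{t})$ follows immediately from the subordinated representation, after which the truncation-plus-SLLN step is routine. The only point one should check carefully is the independence of $N_{t}$ and $\{\Delta_{n}\}$, which is already noted in Subsection~\ref{subsec:subordinated}: $N_{t}$ is a functional of the $T$-chain alone, while the $\Delta_{n}$ are the fresh Rademacher coin flips performed at each visit to the escape level. An alternative, more self-contained route that avoids any renewal theory would be to argue directly from the convolution recurrence $\sigma_{t+1}^{2}=\mathbb{P}(T\leq t)+\sum_{\tau=0}^{t}p(\tau)\sigma_{t-\tau}^{2}$, but any such Tauberian argument would be considerably more technical and less transparent than the subordinated-walk approach sketched above.
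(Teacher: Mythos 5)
Your proof is correct, but it follows a genuinely different route from the paper's. The paper stays entirely within the ``direct iterative'' framework of Proposition~\ref{prop:sigmarecurrence}: for an arbitrary $\alpha>0$ it writes $\sigma_t^2=\alpha\,t+\epsilon_t$, solves the resulting recurrence for $\epsilon_t$ via the convolution kernel $Q_t$ (the same device used in the proof of Theorem~\ref{teo:NormalMSD}), observes that the forcing term $\delta_t\to-\infty$ because $\mathbb{E}(T)=\infty$, and concludes that $\epsilon_t$ is bounded above, whence $\limsup_t\sigma_t^2/t\leq\alpha$ for every $\alpha>0$. You instead use the subordinated representation of Subsection~\ref{subsec:subordinated} to get $\sigma_{t}^2=\mathbb{E}(N_{t-1})$ (up to an additive constant coming from the indexing of the sum $\sum_{n}\Delta_n$, which is immaterial here) via Wald's second-moment identity, and then prove $\mathbb{E}(N_t)/t\to 0$ by truncating the trapping times and invoking the renewal SLLN plus dominated convergence --- i.e.\ the elementary renewal theorem in the infinite-mean regime. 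Both arguments are sound. Your version is shorter, more transparently probabilistic, and yields the structurally informative identity between the MSD and the expected number of escapes, which the paper never states explicitly; it does, however, import the SLLN and renewal machinery. The paper's version is more elementary and self-contained (only the recurrence relation and positivity of $Q_t$ are used), and it reuses the exact bookkeeping that later produces the quantitative finite-time estimates of Theorems~\ref{teo:NormalMSD} and~\ref{teo:MSD-Subdiffusion}, which is presumably why the authors chose it. The one point you should make explicit if you write this up is the independence of $\{\Delta_n\}$ and $N_t$ needed for Wald's identity; as you note, this is already asserted in Subsection~\ref{subsec:subordinated} (the directions are fresh fair coin flips at each visit to the escape level, while $N_t$ is a functional of the $T$-chain alone), so no real gap remains.
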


\begin{proof}
Let us assume that $\sigma_t^2=\alpha\,t+\epsilon_t$ for some 
$\alpha >0$ and each $t\geq 0$. Using~\eqref{eq:epsilon} we obtain 
$\epsilon_{t+1}=\sum_{0\leq \tau\leq t}\epsilon_{t-\tau}  
   p(\tau)+\delta_t$, 
with
\[
\delta_t= 
\mathbb{P}(T\leq t)-\alpha \left((t+1)\,\mathbb{P}(T>t)
           +\sum_{0\leq\tau\leq t}(\tau+1)p(\tau)\right).
\]
Following the same reasoning as in the proof of 
Theorem~\ref{teo:NormalMSD} we obtain
$\epsilon_{t+1}=\sum_{0\leq \tau\leq t}Q_{t-\tau}
                             \delta_\tau$, with $Q_{\tau}$ 
recursively defined by 
$Q_{t+1}=\sum_{0\leq\tau\leq t}Q_{t-\tau}\,p(\tau)$, and such that 
$Q_0=1$. Taking this into account we have 
\begin{align*}
\epsilon_{t+1}=\sum_{0\leq \tau\leq t}Q_{t-\tau}\delta_\tau
&\geq - \alpha \left(\sum_{0\leq \tau\leq t}(\tau+1)
         \mathbb{P}(T>\tau) + \sum_{0\leq\tau\leq t}
\sum_{0\leq \tau'\leq \tau}(\tau'+1)p(\tau') \right)\\
&\geq -\alpha\,(t+1) 
      \left(\frac{t+2}{2}\mathbb{P}(T\geq t+1)
+\sum_{0\leq\tau\leq t}(\tau+1)\,p(\tau)\right),
\end{align*}
for all $t\in\mathbb{N}$. 
Since $\mathbb{E}(T)=\infty$, then 
$\lim_{t\to\infty}\delta_t=-\infty$, which implies that
$\delta_t < 0$ eventually for all $t$. Hence, there
exists $t_1\in \mathbb{N}$ such that 
$\epsilon_{t+1}=\sum_{0\leq \tau\leq t}
Q_{t-\tau}\delta_\tau \leq 
\sum_{0\leq \tau < t_1}|\delta_\tau|$ 
for all $t\in\mathbb{N}$. Thus, we have the inequalities
\[
-\alpha\,(t+1)\left(\frac{t+2}{2}\mathbb{P}(T\geq t+1)
     +\sum_{0\leq\tau\leq t}(\tau+1)\,p(\tau)\right)
\leq \epsilon_{t+1}\leq 
       \sum_{0\leq \tau < t_1}|\delta_\tau|
\]
for all $t\in\mathbb{N}$, from which it follows that
\[
-\infty \leq \limsup_{t\to\infty}\frac{\sigma_t}{t}
=\alpha+\limsup_{t\to\infty} \frac{\epsilon_t}{t}
\leq \alpha.
\]
The proposition follows from the fact that $\sigma_t^2>0$ for each 
$t\geq 0$, taking into account that $\alpha>0$ can be taken 
arbitrarily small.
\end{proof}

\bigskip\noindent
\paragraph{\emph{Variation slower than any power-law}}
In the case of trapping time distributions decaying as a power-law, 
the growth of the MSD is dominated by a power-law sub-linear growth. 
The exponent of the power-law governing the MSD growth directly depends 
on the exponent of the power-law decay of the trapping time distribution. 
To be more precise we need the following definition. We will say that 
a strictly positive function $\tau\mapsto g(\tau)$ \emph{varies slower than 
any power-law} if $\lim_{t\to\infty}g(t)\,t^{-\epsilon}=0$ and 
$\lim_{t\to\infty}g(t)\,t^{\epsilon}=\infty$ for any $\epsilon>0$. 
In particular, any regularly varying function of order zero varies 
slower than any power-law.

\medskip\noindent
In what follows we will need the next statement with 
some properties satisfied by functions varying slower than any power-law, 
the proof of which is 
remitted to the Appendix~\ref{app:slower}.

\medskip\begin{claim}\label{cla:slowerthan}
Let $t\mapsto g(t)$ and $t\mapsto h(t)$ be two functions varying slower 
than any power-law. Then the following are functions varying slower 
than any power-law.
\begin{center}
\begin{tabular}{lll}
a) $t\mapsto \lambda\,g(t)$ with $\lambda >0$, 
   & b) $t\mapsto g(t)+h(t)$, 
   & c) $t\mapsto g(t)\,h(t)$, \\  
d) $t\mapsto 1/g(t)$,       
   & e) $t\mapsto \min_{\mu\,t\leq \tau\leq \lambda\, t}g(\tau)$,  
   & f) $t\mapsto \max_{\mu\,t\leq \tau\leq \lambda\, t}g(\tau)$
                            with $0\leq \mu < \lambda$.
\end{tabular}
\end{center}
Furthermore, if $g(t)\leq f(t)\leq h(t)$ for each $t\in\mathbb{N}$, then \\
g) $t\mapsto f(t)$ varies slower than any power-law, \\
and, if $\tau\mapsto P(\tau)\geq 0$ is such that
$0 < \sum_{s \geq 1}P(s)\,(s+1)^{\epsilon_0} < \infty$ for some 
                                    $\epsilon_0 >0$, then \\
h) $t\mapsto 
\sum_{s\geq 1} P(s)\,\max_{s\,t < \tau\leq (s+1)\,t}g(\tau)$, 
varies slower than any power-law as well.
\end{claim}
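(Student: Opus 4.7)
The plan is to treat (a)--(g) as bookkeeping directly from the definition, with a single recurring trick of exponent-splitting $t^{\epsilon} = t^{\epsilon/2}\cdot t^{\epsilon/2}$, and to reserve the real work for (h). Throughout I will use the equivalent reformulation: $g$ varies slower than any power-law if and only if for every $\epsilon > 0$ and every $K > 0$ there exists $\tau_0 = \tau_0(\epsilon, K)$ such that $K\,\tau^{-\epsilon} \leq g(\tau) \leq K^{-1}\,\tau^{\epsilon}$ for all $\tau \geq \tau_0$. Assuming $g$ is bounded on bounded sets (automatic for functions on $\mathbb{N}$), the upper inequality can be extended to all $\tau \geq 1$ at the cost of replacing $K^{-1}$ by a larger constant $C_\epsilon$.

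For (a), scaling by $\lambda > 0$ passes through both limits. For (b), the sum of two positive sequences tending to $0$ (resp.\ $\infty$) tends to $0$ (resp.\ $\infty$) after multiplication by $t^{\mp\epsilon}$. For (c), write $g(t)h(t)\,t^{-\epsilon} = \bigl(g(t)t^{-\epsilon/2}\bigr)\bigl(h(t)t^{-\epsilon/2}\bigr)$, and analogously for the diverging side. For (d), use $t^{-\epsilon}/g(t) = 1/\bigl(g(t)t^{\epsilon}\bigr)$ and its reciprocal. For (e) and (f), the key observation is that for $\tau \in [\mu t,\lambda t]$ one has $\tau$ comparable to $t$; applying the defining inequalities at level $\epsilon/2$ uniformly across the window bounds both the min and the max from either side by constant multiples of $t^{\pm\epsilon/2}$, and multiplying by $t^{\pm\epsilon}$ yields the two required limits. (The case $\mu=0$ in (e) uses additionally that the minimum is attained at some $\tau_t \geq 1$ with $\tau_t \to \infty$, since $g(\lambda t) \to 0$ polynomially would be ruled out.) Part (g) is a direct squeeze, since the inequalities defining slow variation are monotone in $f$.

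The real obstacle is part (h). Let $M_s(t) := \max_{st < \tau \leq (s+1)t} g(\tau)$, so that $F(t) := \sum_{s\geq 1} P(s)\,M_s(t)$. For the upper bound, fix $\epsilon > 0$ and choose $\eta := \tfrac{1}{2}\min(\epsilon_0,\epsilon) > 0$. By the uniform bound $g(\tau) \leq C_\eta(\tau+1)^{\eta}$ valid for all $\tau \geq 1$, one has $M_s(t) \leq C_\eta\bigl((s+1)t+1\bigr)^{\eta} \leq C_\eta'\,(s+1)^{\eta}\,t^{\eta}$, whence
\[
F(t) \leq C_\eta'\,t^{\eta}\sum_{s\geq 1} P(s)(s+1)^{\eta},
\]
and since $\eta < \epsilon_0$ the series converges, giving $F(t)\,t^{-\epsilon} \leq C''\,t^{\eta-\epsilon} \to 0$. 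For the lower bound I isolate a single term: since $\sum_{s\geq 1} P(s)(s+1)^{\epsilon_0} > 0$, there exists $s^* \geq 1$ with $P(s^*) > 0$, and then $F(t) \geq P(s^*)\,M_{s^*}(t)$. Applying (f) with $\mu = s^*$ and $\lambda = s^*+1$ shows that $M_{s^*}$ varies slower than any power-law, so $F(t)\,t^{\epsilon} \to \infty$. The delicate point is the uniform-in-$s$ domination by a power of $(s+1)t$ that makes the series summable; this is the only place the moment hypothesis on $P$ is used, and it is precisely why it was built into the statement.
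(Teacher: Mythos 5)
Your proof of (a)--(d) and (g), and your upper bound in (h), follow essentially the paper's route: dominate $\max_{s\,t<\tau\leq(s+1)t}g(\tau)$ by a constant times $(s+1)^{\eta}\,t^{\eta}$ with $\eta$ small enough that $\sum_{s}P(s)(s+1)^{\eta}$ converges by comparison with the $\epsilon_0$-moment, then let $t^{\eta-\epsilon}\to 0$. Your lower bound in (h) is genuinely different and cleaner. The paper keeps the entire sum, works with the maximizers $u_{s,t}\in(st,(s+1)t]$, and uses that $g(u_{s,t})\,u_{s,t}^{\epsilon/2}\geq 1$ eventually, uniformly in $s$ (because $u_{s,t}>s\,t\geq t$), to get a divergent lower bound $t^{\epsilon/2}\sum_s P(s)(s+1)^{-\epsilon/2}$. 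You instead discard all but one term $s^*$ with $P(s^*)>0$ and invoke part (f); this sidesteps the uniformity-in-$s$ issue entirely and is perfectly valid (the mismatch between the half-open window in (h) and the closed window in (f) is absorbed by (g)). Both arguments use the moment hypothesis on $P$ only for the upper bound, as you correctly observe.

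The one genuine flaw is the parenthetical treatment of $\mu=0$ in (e). You assert that the minimizer $\tau_t$ of $g$ over $\{1,\dots,\lfloor\lambda t\rfloor\}$ tends to infinity. This is false in general: take $g$ increasing, e.g.\ $g(\tau)=\log(\tau+2)$, for which $\tau_t=1$ for every $t$ and the minimum is the constant $g(1)$. The conclusion of (e) still holds in that situation --- a quantity pinched between two positive constants trivially varies slower than any power-law --- but your stated justification does not cover it, and your "comparable to $t$" argument breaks down precisely because the window reaches down to $\tau=1$. The paper handles this by an explicit dichotomy on the minimizers $\ell_t$: if $\ell_t\to\infty$ then $t^{\mp\epsilon}g(\ell_t)$ is controlled by $\ell_t^{\mp\epsilon}g(\ell_t)$ up to the harmless factors $\lambda^{\pm\epsilon}$; if the $\ell_t$ stay bounded (possible only when $\mu=0$), the minimum is sandwiched between $\min_{\tau\leq B}g(\tau)>0$ and $g(\lfloor\lambda t\rfloor)$, and one concludes directly. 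You need to add this second branch (or split into subsequences according to whether $\ell_t$ is bounded); once it is in place, your (e) is complete and (f) follows via $\max g=\bigl(\min 1/g\bigr)^{-1}$ exactly as in the paper.
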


\medskip \noindent We are now able to prove the following.

\medskip
\begin{theorem}\label{teo:MSD-Subdiffusion}
Let suppose that $p(\tau)=(\tau+1)^{-q}\,g(\tau)$, with $1 < q \leq 2$ 
and $\tau\mapsto g(\tau)$ varying slower than any power-law. Then 
there exists $\tau\mapsto h(\tau)$, varying slower than any power-law, 
such that $\sigma_t^2=h(t)\,t^{q-1}$ for all $t\in\mathbb{N}$.
\end{theorem}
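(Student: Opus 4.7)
The plan is to derive explicit asymptotics for $\sigma_t^2$ from the recurrence of Proposition~\ref{prop:sigmarecurrence} by means of generating functions, and then to deduce that $h(t):=\sigma_t^2/t^{q-1}$ varies slower than any power-law. First I would set $P(z):=\sum_{\tau\geq 0} p(\tau)z^\tau$ and $F(z):=\sum_{t\geq 0} \sigma_t^2 z^t$. A direct manipulation of the recurrence, using $\sigma_0^2=0$, yields
\[
F(z)\;=\;\frac{zP(z)}{(1-z)\bigl(1-zP(z)\bigr)},
\]
equivalently $\sigma_t^2=\sum_{s=1}^{t} Q_s$, where $Q_s$ is the sequence $Q_0=1$, $Q_{t+1}=\sum_{\tau\leq t}p(\tau)Q_{t-\tau}$ already appearing in the proof of Theorem~\ref{teo:NormalMSD} (consistently with the interpretation $\sigma_t^2=\mathbb{E}(N_{t-1})$ obtained from the subordinated-random-walk decomposition of Subsection~\ref{subsec:subordinated}).

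Next I would extract the behaviour of $1-zP(z)$ as $z\to 1^-$. Abel summation together with closure property~(h) of Claim~\ref{cla:slowerthan} converts the hypothesis $p(\tau)=(\tau+1)^{-q}g(\tau)$ into $\mathbb{P}(T>\tau)=\tau^{-(q-1)}\tilde g(\tau)$, with $\tilde g$ again varying slower than any power-law. A standard Abelian-type estimate then gives $1-P(z)\sim \Gamma(2-q)(1-z)^{q-1}\tilde g\bigl(1/(1-z)\bigr)$ for $1<q<2$, and $1-P(z)\sim(1-z)\ell\bigl(1/(1-z)\bigr)$ in the boundary case $q=2$, with $\ell$ again varying slower than any power-law. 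In both regimes $1-zP(z)$ shares the leading behaviour of $1-P(z)$, since the extra contribution $(1-z)P(z)$ is of strictly lower order.

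Inserting this into the formula for $F(z)$ gives $F(z)\sim(1-z)^{-q}/L\bigl(1/(1-z)\bigr)$ with $L$ varying slower than any power-law. Because $\sigma_t^2\geq 0$, Karamata's Tauberian theorem applies and yields $\sum_{t\leq T}\sigma_t^2\sim T^q/\bigl(\Gamma(q+1)\,L(T)\bigr)$. To upgrade this averaged statement to a pointwise asymptotic, I would invoke the monotonicity proved in Proposition~\ref{prop:unboundedness}: the two-sided bound $(T/2)\,\sigma_{T/2}^2\leq\sum_{t\leq T}\sigma_t^2\leq T\,\sigma_T^2$, combined with the slow variation of $L$, sandwiches $\sigma_T^2$ between constant multiples of $T^{q-1}/L(T)$. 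Consequently $h(t)=\sigma_t^2/t^{q-1}$ is squeezed between two multiples of $1/L(t)$, and closure properties~(a), (d) and~(g) of Claim~\ref{cla:slowerthan} confirm that $h$ varies slower than any power-law.

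The main obstacle I anticipate is the boundary case $q=2$, where $1-P(z)$ and $(1-z)P(z)$ are of the same power order $(1-z)$ and the logarithmic-type slowly varying factor must be tracked attentively throughout the Abelian and Tauberian steps. An alternative, more elementary route---likely the one that property~(h) of Claim~\ref{cla:slowerthan} is designed to support---would bypass the Tauberian machinery altogether, establishing matching upper and lower bounds on $\sigma_t^2$ by induction on~$t$ and using~(h) to propagate the ``slower than any power-law'' structure through each convolution step. That alternative is more laborious but reaches the same conclusion.
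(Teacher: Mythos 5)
Your generating-function identity is correct and gives a clean reformulation: $F(z)=zP(z)/\bigl((1-z)(1-zP(z))\bigr)$ does follow from Proposition~\ref{prop:sigmarecurrence}, and $\sigma_t^2=\sum_{s=1}^tQ_s=\mathbb{E}(N_{t-1})$ is a genuinely nice observation that the paper does not exploit. The paper instead works directly on the recurrence: it defines the critical exponent $\beta=\sup\{b:\limsup\sigma_t^2/t^b=\infty\}$, shows $h(t)=\sigma_t^2/t^\beta$ varies slower than any power-law via Claim~\ref{cla:slowerthan}, and pins down $\beta=q-1$ from the identity $\mathbb{P}(T>t)\,\sigma_t^2=\mathbb{P}(T\leq t)-\sum_{\tau\leq t}(\sigma_t^2-\sigma_{t-\tau}^2)p(\tau)-(\sigma_{t+1}^2-\sigma_t^2)$, which yields $\sigma_t^2\leq 1/\mathbb{P}(T>t)$ and eventually $2\sigma_t^2\geq 1/\mathbb{P}(T>t)$. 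So the two routes aim at the same renewal-theoretic fact $\sigma_t^2\asymp 1/\mathbb{P}(T>t)$, but by different means.

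The gap in your version is the repeated appeal to Karamata theory under a hypothesis that does not support it. The theorem only assumes that $g$ \emph{varies slower than any power-law}, which is strictly weaker than Karamata slow variation (the paper stresses exactly this distinction in its final remarks); for instance a bounded function oscillating between $1$ and $2$ on doubly-exponential scales satisfies the hypothesis but is not slowly varying, and one can even arrange $g$ sub-power-law with $g(2t)/g(t)$ unbounded. Consequently: (i) the Abelian step $1-P(z)\sim\Gamma(2-q)(1-z)^{q-1}\tilde g\bigl(1/(1-z)\bigr)$ is not available — you only get two-sided bounds with possibly different sub-power-law factors on each side; (ii) Karamata's Tauberian theorem, which you invoke to pass from $F(z)$ to $\sum_{t\leq T}\sigma_t^2$, requires the factor $L$ to be slowly varying and does not apply; and (iii) the final sandwich ``combined with the slow variation of $L$'' uses boundedness of ratios like $L(2T)/L(T)$, which is exactly what the hypothesis fails to guarantee. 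The conclusion of the theorem is weak enough (only behaviour up to $t^{\pm\epsilon}$) that your scheme can likely be repaired by replacing every asymptotic equivalence with two-sided bounds and propagating the sub-power-law property through each convolution and each change of scale via Claim~\ref{cla:slowerthan} (in particular item (h)) — but that repair is not a cosmetic fix; it is essentially the content of the paper's proof, and your sketch defers it entirely to the unexecuted ``alternative, more elementary route.'' As written, the argument proves the theorem only under the stronger hypothesis that $g$ is slowly varying in Karamata's sense.
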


\medskip
\begin{proof}
Let $\beta=\beta(q):=
\sup\{b\geq 0:\ \limsup_{n\to\infty}\sigma_t^2/t^b=\infty\}$. 
Proposition~\ref{prop:unboundedness} ensures that 
$\lim_{t\to\infty}\sigma_t^2=\infty$, which implies $\beta(q)\geq 0$. 
It is easy to verify that $\lim_{t\to\infty}\sigma_t^2/t^b=\infty$ 
for each $0\leq b < \beta$ and $\lim_{t\to\infty}\sigma_t^2/t^b=0$ 
for each $b>\beta$. 

\medskip\noindent For $\beta$ defined as above, the function 
$t\mapsto h(t):=\sigma_t^2/t^{\beta}$ varies slower than any power-law. 
This follows from Claim~\ref{cla:slowerthan}-$h$, and the fact that 
both $h_1(t):=\min_{0\leq \tau\leq t}\sigma_\tau^2/\tau^\beta$ and $h_2(t):=\max_{0\leq \tau\leq t}\sigma_\tau^2/\tau^\beta$ vary slower 
than any power-law. Indeed, for each $\epsilon >0$ we have
\begin{align*}
\lim_{t\to\infty}
h_1(t)\,t^{-\epsilon}
&=
\lim_{t\to\infty}
  \left(
      \min_{1\leq \tau\leq t}
       \left\{\frac{\sigma_\tau^2}{\tau^\beta}
       \right\}\,
      \min_{1\leq \tau\leq t}
        \left\{\tau^{-\epsilon}
        \right\}
  \right)
\leq 
\lim_{t\to\infty}
  \left(
    \min_{1\leq \tau\leq t}
         \frac{\sigma_\tau^2}{\tau^{\beta+\epsilon}}
  \right)
\leq
\lim_{t\to\infty}
   \frac{\sigma_t^2}{t^{\beta+\epsilon}}=0, \\
\lim_{t\to\infty}h_2(t)\,t^\epsilon
&=
\lim_{t\to\infty}
  \left(
    \max_{1\leq \tau\leq t}
    \left\{
       \frac{\sigma_\tau^2}{\tau^\beta}
    \right\}\,
    \max_{1\leq \tau\leq t}
    \left\{\tau^\epsilon\right\}
  \right)
\geq 
\lim_{t\to\infty}
   \left(
     \max_{1\leq \tau\leq t}
      \left\{
        \frac{\sigma_\tau^2}{\tau^{\beta-\epsilon}}
      \right\}
   \right)
\geq 
\lim_{t\to\infty}\frac{\sigma_t^2}{t^{\beta-\epsilon}}=\infty.
\end{align*}
On the other hand, taking into account that 
$\inf_{t\in\mathbb{N}}\sigma_t^2/t^{b}>0$ for each $b<\beta$, and 
$\sup_{t\in\mathbb{N}}\sigma_t^2/t^{b}<\infty$ for each $b>\beta$, 
we have
\begin{align*}
\lim_{t\to\infty}h_1(t)\,t^{\epsilon}
&=
\lim_{t\to\infty}
   \left(
       \min_{1\leq \tau\leq t}
       \left\{
         \frac{\sigma_\tau^2}{\tau^\beta}
       \right\}\,
       t^{\epsilon}
   \right)
\geq 
\lim_{t\to\infty}
    \left(
      \min_{1\leq \tau\leq t}
       \left\{
           \frac{\sigma_\tau^2}{\tau^{\beta-\epsilon/2}}
       \right\}\,
       t^{\epsilon/2}
    \right)\\
&\geq 
  \left(
   \inf_{t\in\mathbb{N}}
      \frac{\sigma_t^2}{t^{\beta-\epsilon/2}}
  \right)
   \lim_{t\to\infty}t^{\epsilon/2}=\infty,\\
\lim_{t\to\infty}h_2(t)\,t^{-\epsilon}
&=
\lim_{t\to\infty}
  \left(
    \max_{1\leq \tau\leq t}
    \left\{\frac{\sigma_\tau^2}{\tau^\beta}\right\}\,
     t^{-\epsilon}
  \right)
\leq 
\lim_{t\to\infty}
  \left(
     \max_{1\leq \tau\leq t}
      \left\{\frac{\sigma_\tau^2}{\tau^{\beta+\epsilon/2}}
      \right\}\,
      t^{-\epsilon/2}
   \right)\\
&\leq 
  \left(
   \sup_{t\in\mathbb{N}}\frac{\sigma_t^2}{t^{\beta+\epsilon/2}}
  \right)
  \lim_{t\to\infty}t^{-\epsilon/2}=0.
\end{align*}

\medskip\noindent To conclude the proof we verify that $\beta(q)=q-1$. 
For this, according to Proposition~\ref{prop:unboundedness}, we note that
$\sigma_s^2-\sigma_t^2\geq 0$ whenever $s\geq t$. Then, using this and 
Proposition~\ref{prop:sigmarecurrence} we obtain
\begin{equation}\label{eq:deltasigma}
\mathbb{P}(T>t)\,\sigma_t^2 = \mathbb{P}(T\leq t)
       -\sum_{\tau\leq t}
       \left(\sigma_t^2-\sigma_{t-\tau}^2\right)\,p(\tau)-
       \left(\sigma_{t+1}^2-\sigma_t^2\right).
\end{equation}
for each $t\geq 1$. This implies that $\sigma_t^2\leq 1/\mathbb{P}(T>t)$ 
and therefore, for each $b > q-1$ and $t\in\mathbb{N}$ we have
\begin{align*}
\frac{\sigma_t^2}{t^b}
& \leq
   \frac{t^{-b+q-1}}{\sum_{\tau>t}g(\tau)\,t^{q-1}\,(\tau+1)^{-q}}
\leq
  \frac{t^{-b+q-1}}{\left(\sum_{t < \tau\leq 2\,t}
      g(\tau)\,(\tau/t+1)^{-q}\right)\,t^{-1}}\\
&\leq  \frac{t^{-b+q-1}}{\min_{t<\tau\leq 2\,t}g(\tau)\,
        \left(\sum_{t < \tau\leq 2\,t}(\tau/t+1)^{-q}\right)\,t^{-1}}
\leq \frac{t^{-b+q-1}}{3^{-q}\,\min_{t<\tau\leq 2\,t}g(\tau)}
\end{align*}
Since by Claim~\ref{cla:slowerthan}-$e$, 
$t\mapsto \min_{t<\tau\leq 2\,t}g(\tau)$ varies slower than any power-law, 
then $\lim \sigma_t^2/t^{b}=0$ for each $b > q-1$, and so $\beta(q)\leq q-1$. 

\medskip\noindent
Let us assume that $q < 2$. In this case, using $\sigma_t^2=h(t)\,t^\beta$ 
in Equation~\eqref{eq:deltasigma}, and taking into account that 
$\beta\leq q-1$ and $p(\tau)=g(\tau)\,(\tau+1)^{-q}$, we obtain
\begin{align}\label{eq:sigmaupperq2}
\mathbb{P}(T>t)\,\sigma_t^2 
&\geq \mathbb{P}(T\leq t)-\max_{\tau\leq t}h(\tau)\left(
         \sum_{\tau\leq t}
           \left(1-\left(1-\tau/t\right)^\beta
           \right)\, p(\tau)
        +\left(1-\left(1-1/t\right)^\beta\right)\right)\,t^{\beta} \nonumber\\  
&\geq \mathbb{P}(T\leq t)-\max_{\tau\leq t}h(\tau)\left(       
         \sum_{\tau\leq t}\tau\, p(\tau)+1\right)\,t^{\beta-1}  \\
&\geq \mathbb{P}(T\leq t)-\max_{\tau\leq t}h(\tau)\,
         \left(\max_{\tau\leq t}g(\tau)\left(1+\frac{(t+1)^{2-q}}{2-q}\right)+1
         \right)\,t^{\beta-1} \nonumber\\
&\geq \mathbb{P}(T\leq t)-\max_{\tau\leq t}h(\tau)\,
         \left(\max_{\tau\leq t}g(\tau)\left(1+\frac{1}{2-q}\right)+1
         \right)\,(t+1)^{\beta+1-q}.  \nonumber
\end{align}
If $\beta=q-1$, then we have nothing to prove. Otherwise, if 
$\beta < q-1$, since by Claim~\ref{cla:slowerthan}-$f$, 
$t\mapsto \max_{\tau\leq t}h(\tau)$ and 
$t\mapsto \max_{\tau\leq t}g(\tau)$ both vary slower than any power-law, 
then there exists $t_0$ such that for all $t\geq t_0$ we have 
$2\,\sigma_t^2\geq 1/\mathbb{P}(T>t)$. Hence, for each $b < q-1$ and 
$t\geq t_0$ we have
\begin{align*}
\frac{\sigma_t^2}{t^b}
& \geq
   \frac{t^{-b+q-1}}{\sum_{s\geq 1}s^{-q}
     \left(\sum_{s\,t < \tau \leq (s+1)\,t}g(\tau)\right)\,t^{-1}}
\geq
  \frac{t^{-b+q-1}}{\sum_{s\geq 1}s^{-q}
     \,\max_{s\,t < \tau \leq (s+1)\,t}g(\tau)}.
\end{align*}
Claim~\ref{cla:slowerthan}-$h$ ensures that $t\mapsto\sum_{s\geq 1}s^{-q}
     \,\max_{s\,t < \tau \leq (s+1)\,t}g(\tau)$ varies slower than 
any power-law, therefore $\lim \sigma_t^2/t^{b}=\infty$ for each 
$b < q-1$, hence $\beta(q)\geq q-1$. For the remaining case, $q=2$, Inequality~\eqref{eq:sigmaupperq2} implies
\[
\mathbb{P}(T>t)\,\sigma_t^2\geq \mathbb{P}(T\leq t) - 
     \max_{\tau\leq t}h(\tau)\,\left(\max_{\tau\leq t}g(\tau)
     \left(1+\log(t+1)\right)+1\right)\,t^{\beta-1},
\]
and since $t\mapsto \max_{\tau\leq t}h(\tau)$, 
$t\mapsto \max_{\tau\leq t}g(\tau)$ and $t\mapsto \log(t+1)$ vary 
slower than any power-law, then $2\,\sigma_t^2\geq 1/\mathbb{P}(T>t)$ 
for all $t$ sufficiently large, and from there we can proceed as in 
the previous case, concluding that $\beta(2)\geq 1$. This completes 
the proof.
\end{proof}

\bigskip

\paragraph{\emph{Power-law trapping time}} 
For power-law distributed trapping times with exponent $q\leq 2$, the 
finite-time behavior of the MSD can be reasonably well fitted by a 
power-law. Using Proposition~\ref{prop:sigmarecurrence} we computed 
$\sigma_t^2$ for $1\leq t\leq 2^{17}$ and fitted the behavior to a 
power-law, $\sigma_t^2={\mathcal O}(t^\beta)$, with an exponent 
$\beta_N(q)$, which deviates from the expected exponent $\beta(q)=q-1$. 
We plot this effective exponent in Figure~\ref{fig:exponent<1} for time 
intervals $10\leq t\leq N$, with $N=2^{13}, 2^{15}$ and $2^{17}$. 
For each $N$, the behavior of $\beta_N(q)$ can be very well fitted by a 
sigmoidal function $\beta(q)=c+2(1-c)/(1+e^{r |q-3|^\eta})$.

\begin{center}
\begin{figure}[h]
\begin{tikzpicture}
\definecolor{mybarcolor}{RGB}{210 207 155}
\pgfmathsetmacro{\xmin}{1}
\pgfmathsetmacro{\ymin}{0.0}
\begin{axis}[ymin=0,%
  xmin=\xmin,%
  ymin=\ymin,
  ylabel=\textbf{$\hskip 60pt \beta_{N}(q)$\hskip 10pt $\beta(q)$},%
  xlabel=\textbf{\hskip 60pt  $q$},%
  grid=major,%
]
\addplot[ color = blue] table {%
   1.010000000000000   0.515563778388733
   1.060000000000000   0.512684815141280
   1.110000000000000   0.514201542044770
   1.160000000000000   0.519884106794074
   1.210000000000000   0.529407210091236
   1.260000000000000   0.542400852881217
   1.310000000000000   0.558492970801179
   1.360000000000000   0.577336533136565
   1.410000000000000   0.598619719823978
   1.460000000000000   0.622061652722420
   1.510000000000000   0.647397541641613
   1.560000000000000   0.674356979130971
   1.610000000000000   0.702638692121427
   1.660000000000000   0.731885213464733
   1.710000000000000   0.761662014622240
   1.760000000000000   0.791447308790266
   1.810000000000000   0.820639835289992
   1.860000000000000   0.848590622259795
   1.910000000000000   0.874659358609408
   1.960000000000000   0.898286915540927
   2.010000000000000   0.919066413639457
};
\addplot[ color = blue] table {%
   1.010000000000000   0.387108151223422
   1.060000000000000   0.388224729437350
   1.110000000000000   0.394115267559683
   1.160000000000000   0.404580690604252
   1.210000000000000   0.419334839177768
   1.260000000000000   0.438035656241604
   1.310000000000000   0.460312011926043
   1.360000000000000   0.485781450411924
   1.410000000000000   0.514057371776389
   1.460000000000000   0.544746663927528
   1.510000000000000   0.577440258494053
   1.560000000000000   0.611699798422074
   1.610000000000000   0.647044045751399
   1.660000000000000   0.682939110369300
   1.710000000000000   0.718796946834740
   1.760000000000000   0.753986342018275
   1.810000000000000   0.787859084873497
   1.860000000000000   0.819790711996817
   1.910000000000000   0.849230568052075
   1.960000000000000   0.875751501581279
   2.010000000000000   0.899087598039155
};
\addplot[ color = blue] table {%
   1.010000000000000   0.337394719514335
   1.060000000000000   0.340157456596949
   1.110000000000000   0.347906616497557
   1.160000000000000   0.360465337289053
   1.210000000000000   0.377568569558786
   1.260000000000000   0.398878392495984
   1.310000000000000   0.423998606308125
   1.360000000000000   0.452486505604784
   1.410000000000000   0.483861016079263
   1.460000000000000   0.517607539837315
   1.510000000000000   0.553180739567543
   1.560000000000000   0.590007081522237
   1.610000000000000   0.627489263273501
   1.660000000000000   0.665014624807474
   1.710000000000000   0.701969161268897
   1.760000000000000   0.737757714879474
   1.810000000000000   0.771829364004380
   1.860000000000000   0.803705254792431
   1.910000000000000   0.833004690829342
   1.960000000000000   0.859464818184449
   2.010000000000000   0.882950070940828
};
\addplot[domain=\xmin:2,samples=100, color=blue, dashed] {x-1};
\end{axis}
\end{tikzpicture}
\caption{
The curves $q\mapsto \beta_N(q)$ correspond to the exponent 
of the approximated power-law behavior of the MSD as a function 
of the trapping time distribution's exponent. We show these 
curves for total observation times $N=2^{13}, 2^{15}$ and $2^{17}$. 
The curves approach, as $N\to\infty$, the asymptotic 
exponent $q\mapsto\beta(q)=q-1$ (dashed line).} \label{fig:exponent<1}

\end{figure}
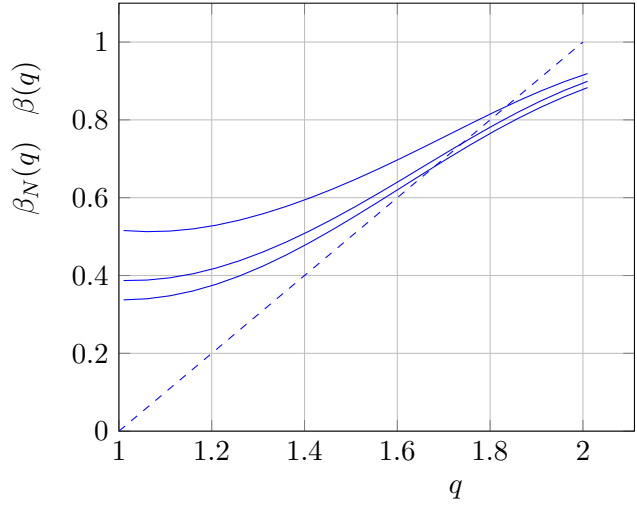
\end{center}

\bigskip
\subsection{Sub-diffusion via $X_t$}\label{subsec:subnormalX_t}
In this subsection we will consider trapping time probability 
distributions satisfying $\mathbb{E}(T^\alpha)<\infty$ and 
$\mathbb{E}(T^\beta)=\infty$, for some $\alpha\in (0,1)$ and all 
$\beta >\alpha$. This case is consistent with a sub-linear growth 
of the MSD dominated by a power-law of order $\alpha$. 
We will show that the expected number $N_t$ of steps the random 
walker makes up to time $t$ grows like $t^{\alpha}$. 

\bigskip
\begin{proposition}[$N_t$ concentration for 
$\alpha < 1$]~\label{prop:ConcentrationSubdiffusion}
Let us assume that $\mathbb{E}(T^\alpha)<\infty$ and 
$\mathbb{E}(T^\beta)=\infty$, for some $\alpha\in (0,1)$ and all 
$\beta >\alpha$. Then there exists a function $t\mapsto h(t)$ 
varying slower than any power-law and converging to $0$, and a 
constant $C>0$ such that   
\[
\mathbb{P}\left(N_t\notin t^\alpha\,\left[h(t),h^{-2}(t)\right]
         \right)\leq C\,h(t)
\]
for each $t\in\mathbb{N}$.
\end{proposition}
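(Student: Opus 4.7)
My plan is to control the lower and upper tails of $N_t$ separately, using the identities $\mathbb{P}(N_t<n)=\mathbb{P}(S_n\ge t+1)$ and $\mathbb{P}(N_t\ge n)=\mathbb{P}(S_n\le t)$ from~\eqref{eqs:psum1}--\eqref{eqs:psum2}, where $S_n:=\sum_{k=1}^n(T_k+1)$. Throughout let $C_\alpha:=\mathbb{E}((T+1)^\alpha)$, which is finite by hypothesis.

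For the lower tail of $N_t$, I would exploit sub-additivity of $x\mapsto x^\alpha$ on $\mathbb{R}^+$ (available since $\alpha\in(0,1)$): from $S_n^\alpha\le\sum_k(T_k+1)^\alpha$ one obtains $\mathbb{E}(S_n^\alpha)\le n\,C_\alpha$, so Markov's inequality yields $\mathbb{P}(S_n\ge t+1)\le n\,C_\alpha/(t+1)^\alpha$. Setting $n_1:=\lfloor t^\alpha h(t)\rfloor$ therefore bounds $\mathbb{P}(N_t<n_1)$ by a constant times $h(t)$, independently of the particular choice of a slowly-varying $h(t)\to 0$.

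For the upper tail, set $n_2:=\lceil t^\alpha/h(t)^2\rceil$ and fix a truncation level $M_t$ to be chosen. Let $\tilde T_k:=(T_k+1)\wedge M_t$ and $\tilde S_n:=\sum_k\tilde T_k$. The union bound gives
\[
\mathbb{P}(S_{n_2}\le t)\le \mathbb{P}(\tilde S_{n_2}\le t)+n_2\,\mathbb{P}(T+1>M_t)\le \mathbb{P}(\tilde S_{n_2}\le t)+n_2\,C_\alpha\,M_t^{-\alpha},
\]
and requiring the truncation error to be $\le h(t)$ forces $M_t\asymp t\,h(t)^{-3/\alpha}$. Since the $\tilde T_k$ are i.i.d.\ in $[1,M_t]$, with mean $\mu_{M_t}:=\mathbb{E}((T+1)\wedge M_t)$ and second moment at most $M_t^{2-\alpha}\,C_\alpha$, Bernstein's inequality produces an exponentially small bound on $\mathbb{P}(\tilde S_{n_2}\le t)$, provided $n_2\mu_{M_t}\ge 2t$, i.e.\ provided $\mu_{M_t}\gtrsim t^{1-\alpha}h(t)^2$.

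The main obstacle is securing this lower bound on $\mu_{M_t}$. The hypothesis $\mathbb{E}(T^\alpha)<\infty$ supplies only the upper bound $\mu_M\le C_\alpha M^{1-\alpha}/(1-\alpha)$, and without regular variation of the tail of $T$ the ratio $g(M):=\mu_M/M^{1-\alpha}$ might in principle decay. This is precisely where the second hypothesis intervenes: $\mathbb{E}(T^\beta)=\infty$ for every $\beta>\alpha$ forbids $g$ from decaying like any positive power of $M$, because a bound $g(M)\lesssim M^{-\delta}$ with $\delta>0$ would give $\mathbb{P}(T+1>2M)\le(\mu_{2M}-\mu_M)/M\lesssim M^{-\alpha-\delta}$, and hence $\mathbb{E}(T^{\alpha+\delta/2})<\infty$, a contradiction. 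Thus $1/g$ varies slower than any power-law, and the plan is to \emph{define} $h$ through $g$: take $h(t)\to 0$ to be any slowly-varying function satisfying $h(t)^2\le c\,g(t\,h(t)^{-3/\alpha})$, whose existence and slow variation follow from the closure properties collected in Claim~\ref{cla:slowerthan}. With this choice both contributions to the upper tail are $O(h(t))$, and combining with the lower-tail bound yields the announced inequality.
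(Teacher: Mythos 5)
Your lower-tail bound is correct and is essentially the paper's own argument: subadditivity of $x\mapsto x^\alpha$ together with Markov's inequality gives $\mathbb{P}(N_t<n_1)\le n_1 C_\alpha/(t+1)^\alpha\le C_\alpha\,h(t)$ for $n_1=\lfloor t^\alpha h(t)\rfloor$, and this works for any choice of $h$. (The paper invokes ``Bahr--Esseen'' for this step, but it is the same elementary estimate.)

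The upper-tail half, however, has a genuine gap. First, the truncation-error term is superfluous: since $\tilde S_{n}\le S_{n}$ pointwise, $\{S_{n_2}\le t\}\subset\{\tilde S_{n_2}\le t\}$ and $\mathbb{P}(S_{n_2}\le t)\le\mathbb{P}(\tilde S_{n_2}\le t)$ with no union bound; it is precisely the spurious requirement $n_2\,\mathbb{P}(T+1>M_t)\le h(t)$ that forces the very large level $M_t\asymp t\,h(t)^{-3/\alpha}$. Second, and fatally, with that $M_t$ Bernstein's inequality does \emph{not} produce an exponentially small bound. The deviation is $s=n_2\mu_{M_t}-t\le n_2\mu_{M_t}\le C_\alpha\,n_2\,M_t^{1-\alpha}$, while the variance proxy you have available is $n_2\,\mathbb{E}(\tilde T_1^2)\le C_\alpha\,n_2\,M_t^{2-\alpha}$; hence the exponent delivered by Bernstein is at most
\[
\frac{s^2}{2\,n_2\,\mathbb{E}(\tilde T_1^2)}
\;\le\;\frac{C_\alpha}{2}\,n_2\,M_t^{-\alpha}
\;=\;\frac{C_\alpha}{2}\,\bigl(t^{\alpha}h(t)^{-2}\bigr)\bigl(c\,t\,h(t)^{-3/\alpha}\bigr)^{-\alpha}
\;=\;O\!\left(h(t)\right)\;\longrightarrow\;0 ,
\]
so the resulting tail bound is at least $e^{-O(h(t))}=1-O(h(t))$, i.e.\ it tends to $1$ and proves nothing. (The true probability is indeed tiny, but this route cannot detect it.) The paper avoids the issue entirely: if $\sum_{k\le n}T_k\le t$ then every $T_k\le t$, so by independence $\mathbb{P}(N_t\ge n)\le\mathbb{P}(T\le t)^{\,n}\le\exp\bigl(-n\,\mathbb{P}(T>t)\bigr)$; choosing $h(t):=(t+1)^{\alpha}\,\mathbb{P}(T\ge t)$ (shown, from your two hypotheses, to vary slower than any power-law and to tend to $0$) and $n=t^{\alpha}h(t)^{-2}$ makes the exponent $\approx-1/h(t)$, whence $e^{-1/h(t)}\le h(t)$. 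A secondary remark: your contradiction argument only excludes a \emph{global} bound $g(M)\le CM^{-\delta}$, i.e.\ it shows $\limsup_M g(M)M^{\delta}=\infty$; concluding that $1/g$ varies slower than any power-law requires $\lim_M g(M)M^{\delta}=\infty$, which is true here but needs an additional argument.
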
 

\begin{proof}
We will start by proving that 
$\mathbb{P}(T\geq \tau)=h(\tau)\,(\tau+1)^{-\alpha}$ for some function 
$\tau\mapsto h(\tau)$ varying slower than any power-law and converging 
to zero. For this we use the inequalities,
\[
\frac{\mathbb{E}\left(T^{\beta}\right)-1}{\beta}\leq
\sum_{\tau' \leq \tau} 
(\tau'+1)^{\beta-1}\,\mathbb{P}(T\geq\tau')\leq  
\frac{\mathbb{E}\left((T+1)^{\beta}\right)}{\alpha},
\]
which are valid for all $\beta \leq 1$ and $\tau\in\mathbb{N}$, and 
are easily verified. With this we have
\[\mathbb{P}(T\geq \tau)\,(\tau+1)^{\alpha} \leq 
\sum_{0\leq \tau' \leq \tau} 
(\tau'+1)^{\alpha-1}\,\mathbb{P}(T\geq\tau')\leq  
\frac{\mathbb{E}\left((T+1)^{\alpha}\right)}{\alpha},\]
for all $\tau\in\mathbb{N}$. Hence, 
$\limsup_{\tau\to\infty}
\mathbb{P}(T\geq \tau)\,(\tau+1)^{\alpha}\leq \mathbb{E}((T+1)^\alpha)/\alpha$,
from which it readily follows that
$\lim_{\tau\to\infty}
\mathbb{P}(T\geq \tau)\,(\tau+1)^{\beta}=0$ for 
each $\beta<\alpha$.

\medskip \noindent 
Suppose now that 
$\limsup_{\tau\to\infty}(\tau+1)^{\beta}\mathbb{P}(T\geq \tau)<\infty$ for
some $\beta >\alpha$. In this case we have
$(\tau+1)^{\beta}\mathbb{P}(T\geq \tau) \leq M$ for all 
$\tau\in\mathbb{N}$ and some $M > 0$, from which it follows that
\[
\sum_{\tau \geq 0}(\tau+1)^{(\beta+\alpha)/2-1}\,\mathbb{P}(T\geq\tau)
\leq
\sum_{\tau \geq 0}\frac{M}{(\tau+1)^{1+(\beta-\alpha)/2}}
\leq M\frac{2+\beta-\alpha}{\beta-\alpha}<\infty.
\]
Hence $\mathbb{E}\left(T^{(\beta+\alpha)/2}\right)<\infty$ which contradicts
the fact that $\mathbb{E}\left(T^{\beta}\right)<\infty$ for all 
$\beta>\alpha$. Therefore $\lim_{\tau\to\infty}
(\tau+1)^{\beta}\mathbb{P}(T\geq \tau)=\infty$ for each $\beta >\alpha$.
This completes the proof that 
$\tau\mapsto h(\tau):=\mathbb{P}(T\geq\tau)/(\tau+1)^\alpha$ varies slower 
than any power-law. The fact that $\mathbb{E}(T^\alpha)<\infty$ ensures that
$h(\tau)$ converges to zero.

\medskip\noindent 
Now, for $n \geq t^\alpha\,h(t)^{-2}$ we have
\begin{align*}
\mathbb{P}\left(N_t\geq n\right)
&=\mathbb{P}\left(\sum_{1\leq k\leq n+1} T_k\leq t-n
           \right)
\leq \left(1-\mathbb{P}(T\leq t-n)\right)^{n}\leq 
     \left(1-\mathbb{P}(T\leq t)\right)^{n}\\
&=\left(1-h(t)\,(t+1)^{-\alpha}\right)^{n}
=\left(1-\frac{h(t)}{(t+1)^\alpha}\right)^{t^\alpha\,h(t)^{-2}}\\
&\leq 
 \exp\left(-\frac{t^{\alpha}}{(t+1)^\alpha\,h(t)}\right)
\leq\left(-\frac{t^{\alpha}}{(t+1)^\alpha\,h(t)}\right)
\leq \frac{h(t)}{(1+t^{-1})^\alpha},
\end{align*}
provided $h(t)\leq (1+t^{-1})^\alpha$. By taking 
$t_0:=\min\{t\geq (2^{1/\alpha}-1)^{-1}:\ 
h(\tau )\leq (1+\tau^{-1})^\alpha\,\, \forall \tau\geq t\}$
and $h_0=\min_{0\leq t\leq t_0}\,h(t)$
we have 
$\mathbb{P}(N_t\geq t^\alpha\,h(t)^{-2})
\leq \max(h_0^{-1},2)\, h(t)$ for all $t\in \mathbb{N}$.

\medskip \noindent 
For the other inequality, suppose that $n-1\leq t^\alpha\,h(t)\leq n$. 
Using Eq.~\ref{eq:NumberOfHits} and Bahr-Esseen's inequality we have
\[
\mathbb{P}\left(N_t\leq n\right)
=\mathbb{P}\left(\sum_{1\leq k\leq n} T_k\geq t-n
           \right)
\leq \frac{n\,\mathbb{E}(T^{\alpha})
         }{(t-n)^\alpha}
\leq \frac{\mathbb{E}(T^{\alpha})
       }{g(t)\,
  \left(1-t^{\alpha-1}\,h(t)-t^{-1}\right)^{\alpha}}.
\]
Let 
$t_1=\min\{t\in\mathbb{N}:\,
1-\tau^{\alpha-1}\,h(\tau)-\tau^{-1} < 2^{-1/\alpha}\,
\, \forall \, \tau\geq t\}$ and 
$h_1=\min_{0\leq t\leq t_1}\,h(t)$, then 
$\mathbb{P}(N_t\leq t^\alpha\,h(t))\leq 
\max(h_1^{-1},2\,\mathbb{E}(T^\alpha))\, h(t)$ for all 
$t\in \mathbb{N}$. 

\medskip\noindent
The proof is done by taking 
$C:=\max(h_0^{-1},h_1^{-1},2,2\mathbb{E}(T^\alpha))$.
\end{proof}

\medskip\begin{remark} According to Gnedenko's Theorem, for 
$\mathbb{P}\left(T\geq t\right)=(t+1)^{-\alpha}\,h(t)$ with 
$t\mapsto h(t)$ a slowly varying function, the sequence of 
distributions 
$x\mapsto\mathbb{P}\left(n^{-1/\alpha}\, \sum_{k=1}^n T_k\leq x\right)$ 
converges in law to $x\mapsto\int_{-\infty}^x F_\alpha(z)\, dz$, 
where $F_\alpha$ is an $\alpha$-stable distribution with support in 
$\mathbb{R}^+$. This result and Barry-Esseen Inequality allow us 
to derive an analogous to the Central Limit Theorem. We have the 
following.
\end{remark}

\medskip
\begin{theorem}~\label{teo:CLTnonDiffusif}
Let us assume that $\mathbb{P}\left(T\geq t\right)=(t+1)^{-\alpha}\,h(t)$ 
with $t\mapsto h(t)$ a slowly varying function. Then
\[
\lim_{t\to\infty}
\mathbb{P}\left(\frac{X_t}{t^{\alpha/2}}\in I\right)=
\int_{\mathbb{R}^+}F_\alpha(z)\,dz
     \frac{1}{\sqrt{2\pi\,z}}
          \int_{I}\exp\left(-\frac{x^2}{2z}\right)\,dx.
\] 
\end{theorem}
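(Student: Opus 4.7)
The plan is to exploit the subordinated representation $X_t=\sum_{n=1}^{N_t}\Delta_n$ obtained in Subsection~\ref{subsec:subordinated}, and to combine two classical limit theorems: the Berry-Esseen bound~\eqref{eq:berryessen} for the Rademacher sums $S_n=\sum_{k=1}^n\Delta_k$, and Gnedenko's theorem applied to the partial sums of the trapping times. As a preliminary step, I would first translate the weak convergence of $\sum_{k=1}^n T_k/n^{1/\alpha}$ towards a positive $\alpha$-stable law into a weak convergence of $Z_t:=N_t/t^\alpha$ towards a random variable $Z$ on $\mathbb{R}^+$. Using the identity $\{N_t\geq n\}=\{\sum_{k=1}^n(T_k+1)\leq t\}$ of~\eqref{eq:NumberOfHits} and the fact that $n\ll t$ whenever $\alpha<1$, one computes, for each $z>0$,
\[
\mathbb{P}\bigl(Z_t\geq z\bigr)=\mathbb{P}\Bigl(\sum_{k=1}^{\lceil zt^\alpha\rceil}T_k\leq t-\lceil zt^\alpha\rceil\Bigr)\longrightarrow \mathbb{P}(Y\leq z^{-1/\alpha}),
\]
where $Y$ is the $\alpha$-stable limit; this identifies the distribution of $Z$ and I will denote its density by $F_\alpha$, adopting the convention that $F_\alpha$ already stands for the density of this tail-inverted stable variable.

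Next I would use the conditional law of $X_t$ given $N_t$: conditionally on $N_t=n$ the position $X_t$ equals in distribution $S_n$, independent of $N_t$, so that
\[
\mathbb{P}\Bigl(\frac{X_t}{t^{\alpha/2}}\in I\Bigr)=\sum_{n\geq 0}\mathbb{P}(N_t=n)\,\mathbb{P}\Bigl(\frac{S_n}{\sqrt{n}}\in\sqrt{\frac{t^\alpha}{n}}\,I\Bigr).
\]
Given $\eta>0$, Proposition~\ref{prop:ConcentrationSubdiffusion} together with the weak convergence above gives $0<a<b<\infty$ and $t_0$ such that $\mathbb{P}(Z_t\notin[a,b])<\eta$ for $t\geq t_0$, so the sum restricted to $n\notin[at^\alpha,bt^\alpha]$ contributes at most $\eta$. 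On the complementary range $n$ is of order $t^\alpha$, and the Berry-Esseen inequality~\eqref{eq:berryessen} lets me replace $\mathbb{P}(S_n/\sqrt n\in J)$ by $(2\pi)^{-1/2}\int_J e^{-y^2/2}dy$ up to an error $O(1/\sqrt n)=O(t^{-\alpha/2})$, uniformly in the interval $J=\sqrt{t^\alpha/n}\,I$.

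After the substitution $y=x/\sqrt{z}$ and the identification $z=n/t^\alpha$, the above truncated sum is rewritten as
\[
\mathbb{E}\bigl[\varphi_I(Z_t)\,\mathbf{1}_{Z_t\in[a,b]}\bigr],\qquad \varphi_I(z):=\frac{1}{\sqrt{2\pi z}}\int_I e^{-x^2/(2z)}\,dx,
\]
up to the $O(t^{-\alpha/2})$ Berry-Esseen correction and the $O(\eta)$ truncation error. Since $\varphi_I$ is bounded and continuous on $(0,\infty)$ and $Z_t\Rightarrow Z$, the Portmanteau theorem yields convergence to $\mathbb{E}[\varphi_I(Z)\mathbf{1}_{Z\in[a,b]}]$. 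Letting $a\downarrow 0$, $b\uparrow\infty$ and $\eta\downarrow 0$ delivers the announced formula $\int_{\mathbb{R}^+}F_\alpha(z)\,\varphi_I(z)\,dz$.

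The main obstacle is the simultaneous control of the two extreme regimes: when $N_t$ is much smaller than $t^\alpha$ both the Berry-Esseen factor $1/\sqrt n$ and the mixing density $\varphi_I(z)$ blow up, while when $N_t$ is much larger than $t^\alpha$ one needs a matching tail bound. Proposition~\ref{prop:ConcentrationSubdiffusion} is precisely what is needed to confine $Z_t$ to a compact sub-interval of $\mathbb{R}^+$ with probability tending to one, so both tails are handled simultaneously by that concentration estimate. A secondary delicate point, which I would acknowledge but not belabor, is that in the full generality of a slowly varying $h$, the normalization in Gnedenko's theorem is some $a_n\sim n^{1/\alpha}L(n)$ rather than $n^{1/\alpha}$, and the statement is to be read with the slowly varying factor absorbed into the scaling $t^{\alpha/2}$ and the density $F_\alpha$.
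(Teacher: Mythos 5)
Your proposal is correct and follows essentially the same route as the paper: the subordinated representation, Gnedenko's stable limit theorem for the partial sums of the $T_k$ translated into a limit law for $N_t/t^{\alpha}$, the Berry--Esseen bound for the Rademacher sums, and Proposition~\ref{prop:ConcentrationSubdiffusion} to control the range of $n$. The only real difference is the final limiting step: the paper proceeds quantitatively, discretizing with a $t$-dependent mesh $\delta=h^2(t)$ and invoking the uniform convergence theorems of Ibragimov--Linnik, whereas you use a softer fixed-truncation Portmanteau argument (with $a,b$ chosen as continuity points of the limit law), which suffices here since the theorem asserts no rate of convergence.
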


\medskip\begin{remark}
Under the hypotheses of this theorem, $\mathbb{E}(T^\gamma)<\infty$ and 
$\mathbb{E}(T^\beta)=\infty$, for all $\gamma <\alpha<\beta$. In this case 
it is not possible, in general, to explicitly determine $F_\alpha$ or the 
speed of convergence, but still some features of the limit behavior 
can be derived from this expression, for instance the fact that it 
is a symmetric distribution and that all absolute moments of order 
larger than $\alpha$ diverge. The scaling indicates that the support 
of the measure is concentrated around $t^\alpha$, but this was already 
clear from Proposition~\ref{prop:ConcentrationSubdiffusion}. 
\end{remark}

\begin{proof}
For each $\delta>0$ and $w\in \mathbb{R}^+$ we have  
\begin{align*}
\mathbb{P}\left(w\leq\frac{N_t}{t^\alpha}\leq w+\delta\right)
&=\mathbb{P}\left(
\sum_{k=1}^{[t^\alpha\, w]}T_k\leq t-t^\alpha\,w
\right)-
\mathbb{P}
\left(
\sum_{k=1}^{[t^\alpha\, (w+\delta)]+1}T_k\leq t-t^\alpha\,(w+\delta)
\right)\\
&=\mathbb{P}\left(\frac{
\sum_{k=1}^{[t^\alpha\, w]}T_k}{[t^\alpha\, w]^{1/\alpha}}
\leq \frac{t-t^\alpha\,w}{[t^\alpha\, w]^{1/\alpha}}
\right)\\
& \hskip 55pt -
\mathbb{P}
\left(
\frac{\sum_{k=1}^{[t^\alpha\, (w+\Delta)]+1}T_k
    }{([t^\alpha\, (w+\delta)]+1)^{1/\alpha}}
\leq \frac{t-t^\alpha\,(w+\delta)
        }{([t^\alpha\, (w+\delta)]+1)^{1/\alpha}}
\right).
\end{align*}
Hence, by virtue of Theorems 4.2.1 and 4.2.2 in~\cite{1971Ibragimov&Linnik}, 
we obtain
\[
\left|\mathbb{P}\left(N_t/t^\alpha\in [w,w+\delta]\right)-
\int_{(w+\delta)^{-1/\alpha}}^{w^{-1/\alpha}}F_\alpha(z)\,dz\right|
\leq g_1(t\,w^{1/\alpha}),\]
where $s\mapsto g_1(s)>0$ is monotonously decreasing, such that 
$\lim_{s\to\infty}g_1(s)=0$. On the other hand, for $m>n$ we have
\[\left|
\int_{\sqrt{t^{\alpha}/n}\, I}e^{-x^2/2}dx-
\int_{\sqrt{t^{\alpha}/m}\, I}e^{-x^2/2}dx
\right|\leq e^{-1/2}\left(\sqrt{\frac{m}{n}}-1\right).
\]
Hence, taking into account Equation~\eqref{eq:NumberOfHits}, Barry-Esseen's inequality and Proposition~\ref{prop:ConcentrationSubdiffusion}, we obtain
\[
\left|\mathbb{P}
\left(\frac{X_t}{t^{\alpha/2}}\in I\right)
-\frac{1}{\sqrt{2\pi}}\sum_{k=1}^{\lceil (h^{-2}(t)-h(t))/\delta\rceil}
\int_{(h(t)+k\delta)^{-1/\alpha}}^{(h(t)+(k-1)\delta)^{-1/\alpha}}
F_\alpha(z)\,\int_{I/\sqrt{z}}e^{-x^2/2}\,dx
\,dz\right|\leq g_2(t,\delta),
\]
with $g_2(t,\delta)=g_1(t\,h(t)^{1/\alpha})+
2/\sqrt{2\pi\,t^{\alpha}h(t)}+e^{-1/2}(\sqrt{(h(t)+\delta)/h(t)}-1)$.
Finally, by taking $\delta=\delta(t):=h^2(t)$ we obtain
\[
\left|\mathbb{P}
\left(\frac{X_t}{t^{\alpha/2}}\in I\right)
-\int_{h^{2/\alpha}(t)}^{h(t)^{-1/\alpha}}
F_\alpha(z)\frac{1}{\sqrt{2\pi\,z}}
\int_{I}\exp\left(\frac{-x^2}{2z}\right)\,dx
\,dz\right|\leq g_3(t),
\]
with $g_3(t)=g_2(t,h^2(t))+h(t)^{2/\alpha+4}/\alpha$, and the result follows.
\end{proof}

\bigskip
\section{Summary and final remarks}

\subsection{Summary}
\begin{itemize} 
\item[A.] 
Thanks to the recurrence relation established in 
Proposition~\ref{prop:sigmarecurrence}, we were able to characterize 
the dynamical regimes of the trapped random walk. According to 
Proposition~\ref{prop:unboundedness}, regardless of the trapping time 
distribution, the MSD of the walk will always diverge. Hence, 
dynamical confinement is impossible in this model. Depending on 
the characteristics of the trapping time distribution, the random walk can 
display a diffusive or sub-diffusive dynamics. 

\item[B.]
The diffusive behavior, which takes place when the mean trapping time 
is finite, can be observed at finite time if the second moment of the 
trapping time is finite, as established in Theorem~\ref{teo:NormalMSD}. 
On the contrary, for trapping time distributions with finite mean and 
diverging second moment, the diffusive behavior takes place asymptotically, 
and for any finite time we can measure an effective sub-diffusive behavior.
To illustrate this sub-diffusive behavior we have computed the MSD as 
a function of time using a collection of power-law trapping time 
distributions (see Figure~\ref{fig:exponent>1}). 

\item[C.]
We consider a class of trapping time distributions consisting of a 
leading power-law behavior with fluctuations around this leading 
behavior, varying slower than any power law. As established in 
Theorem~\ref{teo:MSD-Subdiffusion}, in this case the MSD of the walk 
grows following a power-law directly related to the leading term 
of the trapping time distribution. This power-law behavior holds 
asymptotically, but at any finite time a deviation of smaller order 
can be observed. We illustrate this deviation with numerical computations 
from a collection of power-law distributions 
(see Figure~\ref{fig:exponent<1}).
\end{itemize}
In the next table we summarize the behavior of the MSD 
as a function of the trapping time distribution, for power-law trapping 
time distributions of the type $p(\tau)=\tau^{-q}/\zeta(q)$.

\medskip
\begin{center}
\begin{tabular}{|c|c|c|}
\hline
Exponent       &  MSD leading term &  Finite time deviation \\
\hline
$q > 3$        & $\mathcal{D}\,t$  & $\mathcal{O}(1)$\\
\hline
$2 < q \leq 3$ & $\mathcal{D}\,t$  & $\mathcal{O}(t^{3-q})$\\
\hline
$1 < q \leq 2$ & $t^{q-1}$         & $h(t)\,t^{q-1}$\\      
\hline
\end{tabular}
\end{center}
Here $t\mapsto h(t)$ is a function varying slower than any power-law and 
converging to 0.

\medskip\noindent
In Figure~\ref{fig:exponent} we depict the exponent of the approximated 
power-law behavior for different finite observation times for the same 
family of trapping time distributions. 

\begin{center}
\begin{figure}[h]
\begin{tikzpicture}
\definecolor{mybarcolor}{RGB}{210 207 155}
\pgfmathsetmacro{\xmin}{0.9}
\pgfmathsetmacro{\ymin}{-0.1}
\begin{axis}[%
  xmin=\xmin,%
  ymin=\ymin,
  ylabel=\textbf{$\hskip 60pt \beta_{N}(q)$\hskip 10pt $\beta(q)$},%
  xlabel=\textbf{\hskip 60pt  $q$},%
  grid=major,%
]
\addplot[ color = blue]  table {%
   1.010000000000000   0.515563778388733
   1.160000000000000   0.519884106794074
   1.310000000000000   0.558492970801179
   1.460000000000000   0.622061652722420
   1.610000000000000   0.702638692121427
   1.760000000000000   0.791447308790266
   1.910000000000000   0.874659358609408
   2.060000000000000   0.936791849435148
   2.210000000000000   0.972575043304649
   2.360000000000000   0.989115592568454
   2.510000000000000   0.995811028876723
   2.660000000000000   0.998401015742535
   2.810000000000000   0.999422598780135
   2.960000000000000   0.999850600317113
};
\addplot[ color = blue] table{%
   1.010000000000000   0.387108151223422
   1.160000000000000   0.404580690604252
   1.310000000000000   0.460312011926043
   1.460000000000000   0.544746663927528
   1.610000000000000   0.647044045751399
   1.760000000000000   0.753986342018275
   1.910000000000000   0.849230568052075
   2.060000000000000   0.9191505084461520
   2.210000000000000   0.961155241420844
   2.360000000000000   0.982574258803797
   2.510000000000000   0.992450890590414
   2.660000000000000   0.996837030720230
   2.810000000000000   0.998805007705768
   2.960000000000000   0.999724886829804
};
\addplot[ color = blue] table{%
   1.010000000000000   0.337394719514335
   1.160000000000000   0.360465337289053
   1.310000000000000   0.423998606308125
   1.460000000000000   0.517607539837315
   1.610000000000000   0.627489263273501
   1.760000000000000   0.737757714879474
   1.910000000000000   0.833004690829342
   2.060000000000000   0.903449532142498
   2.210000000000000   0.948443343357862
   2.360000000000000   0.974004903350668
   2.510000000000000   0.987475360812692
   2.660000000000000   0.994339295836294
   2.810000000000000   0.997834788345466
   2.960000000000000   0.999654396883074
};
\addplot[domain=1:2,samples=100, color=blue, dashed] {x-1};
\addplot[domain=2:3,samples=100, color=blue, dashed]{1};
\end{axis}
\end{tikzpicture}
\caption{The curves $q\mapsto\beta_N(q)$ correspond to the exponent 
of the approximated power-law behavior of MSD as a function of the 
trapping time distribution's exponent. We show these curves for total 
observation times $N=2^{13},2^{15}$ and $2^{17}$. The curves approach, 
as $N\to\infty$, the asymptotic exponent $q\mapsto\beta(q)=\min(1,q-1)$ 
(dashed line).}
\end{figure}
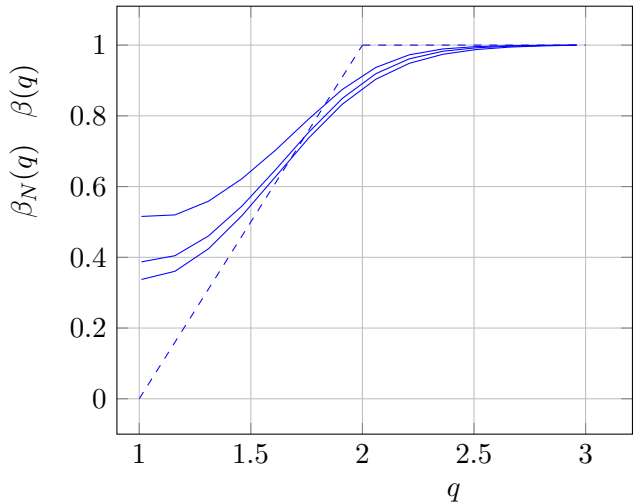\label{fig:exponent}
\end{center}

\bigskip\noindent

\begin{itemize}
\item[D.] If the mean trapping time is finite, then the trapped 
random walk satisfies a Central Limit Theorem. The normalization 
required as well as the speed of convergence towards the normal 
distribution both depend on the characteristics of the trapping 
time distribution. In Theorem~\ref{teo:CentralLimitTheorem} we treat 
the case where the trapping time has a finite fractional moment 
strictly larger than one, for which we prove a power-law convergence 
towards the limit normal distribution. In the case of a trapping 
time distribution belonging to the domain of attraction of the 
Cauchy distribution, Theorem~\ref{teo:CentralLimitTheoremOne}, the 
convergence we found towards the limit normal law is slower than 
any power-law. 

\item[E.] Finally, when the mean trapping time diverges, any trapping 
time distribution belonging to the domain of attraction of a stable 
law leads to a limiting distribution for the trapped random walk, once 
the length is properly re-normalized. The limit distribution is a convex 
combination, governed by the corresponding stable law, of normal 
distributions (see Theorem~\ref{teo:CLTnonDiffusif}).
\end{itemize}

\medskip\noindent
In the next table we present the behavior of the required normalization 
and the speed of convergence towards the normal distribution as a function 
of the trapping time leading exponent, for trapping time distributions 
of the type $p(\tau)=\tau^{-q}/\zeta(q)$.

\medskip
\begin{center}
\begin{tabular}{|c|c|c|}
\hline
Exponent       &  Normalization &  Speed of convergence \\
\hline
$q > 2$        & $(t/\mu)^{1/2}$  & 
                  $\mathcal{O}\left(t^{-q/(2+q)}\right)$\\
\hline
$1 < q \leq 2$    & $t^{(q-1)/2}$  & Slower than any power-law.
\\      
\hline
\end{tabular}
\end{center}

\medskip\noindent 
In Figure~\ref{fig:z-tPlane} we depict several possible trajectories of 
the random walk in the diffusive regime. In the figure we also depict 
the behavior of the standard deviation $\sigma_t\propto \sqrt{t}$. According 
to Theorems~\ref{teo:CentralLimitTheorem} 
and~\ref{teo:CentralLimitTheoremOne}, the random walk spreads
around the vertical line $X_t=0$, with oscillations 
contained inside the curves $t\mapsto\pm\sigma_t$.
The distribution of the number of steps $N_t$, the random walker makes 
until time $t$, is concentrated between 
$n_1(t)\approx (t/\mu)(1-1/d(t))$ and 
$n_2(t)\approx (t/\mu)(1+1/d(t))$. Here $t\mapsto d(t)$ is a
diverging function which can be either a sub-linear power-law 
or a function varying slower than any power-law. The first case
take place when $\mathbb{E}(T^\alpha)<\infty$ for some 
$\alpha > 1$. For a power-law trapping time distribution
$p(\tau)=(\tau+1)^{-q}/\zeta(q)$, this happens when $q > \alpha+1$. 
On the other hand, $t\mapsto d(t)$ varies slower than any power-law 
when $\mathbb{E}(T^\alpha)=\infty$ for each $\alpha>1$. This is
the case of a trapping time distribution 
$p(\tau)=(\tau+1)^{-q}/\zeta(q)$ with 
$q=1+\inf\{\alpha>0:\, \mathbb{E}(T^\alpha)=\infty\}$. 
As we have seen, the distributuion of $N_t$ determines the finite time 
spread of the trapped random walk and their scaling properties.
The image in the case of the sub-diffusive regime 
$\sigma_t\sim t^{\alpha/2}$, is qualitatively the same. In this case, 
the spread of the random walk is bounded, with high probability, between 
the curves $h(t)\,t^\alpha$ and $h^{-2}(t)\, t^\alpha$, where $h(t)$ 
is a positive function converging to zero slower than any power law, 
which can be computed from the trapping time distribution.

\bigskip
\begin{center}
\begin{figure}[h]
\begin{tikzpicture}[scale=0.55]
\draw[help lines] (1,0) grid (15,15);
\draw[<->] (0,0) -- (16,0);
\draw[->] (8,0) -- (8,16);
\draw[ ->] (8,0) to [out=45, in=250] (15,15);
\draw[ ->] (8,0) to [out=135, in=290] (1,15);
\node [right] at (8,15.5) {$t$};  
\node [below] at (15,0) {$X_t,\ \sigma_t$};   
\draw [->, thick, brown] 
(8,0) -- (8,1) -- (7,2) -- (7,3) -- (7,4) -- 
(6,5) -- (6,6) -- (7,7) -- (7,8) -- (7,9) -- (6,10) -- (7,11) --
(6,12) -- (6,13) -- (7,14) -- (7,15) ;    
\draw [->, thick, red] 
(8,0) -- (8,1) -- (8,2) -- (9,3) -- (9,4) -- 
(9,5) -- (9,6) -- (10,7) -- (9,8) -- (8,9) -- (8,10) -- (8,11) --
(7,12) -- (7,13) -- (6,14) -- (6,15) ;  
\draw [->, thick, green] 
(8,0) -- (7,1) -- (7,2) -- (7,3) -- (6,4) -- 
(5,5) -- (6,6) -- (6,7) -- (6,8) -- (5,9) -- (5,10) -- (6,11) --
(6,12) -- (7,13) -- (7,14) -- (8,15) ;
\draw [->, thick, blue] 
(8,0) -- (9,1) -- (9,2) -- (9,3) -- (10,4) -- 
(9,5) -- (9,6) -- (8,7) -- (9,8) -- (8,9) -- (8,10) -- (7,11) --
(7,12) -- (7,13) -- (7,14) -- (8,15) ;
\draw [->, thick, yellow] 
(8,0) -- (7,1) -- (7,2) -- (7,3) -- (7,4) -- 
(8,5) -- (8,6) -- (8,7) -- (9,8) -- (8,9) -- (9,10) -- (9,11) --
(10,12) -- (11,13) -- (10,14) -- (10,15) ;
\node at (13.7,9.4) {$\sigma_t$};  
\node at (8.5,11.3) {$X_t$}; 
\end{tikzpicture}
\caption{
Five possible trajectories for the random walk. The spread 
of the random walk around the vertical line $X_t=0$ 
is of order $\sigma_t$ whereas for any finite time this spreading fluctuates between the concentration bounds $\sqrt{n_1(t)}$ and 
$\sqrt{n_2(t)}$.
}\label{fig:z-tPlane}
\end{figure}
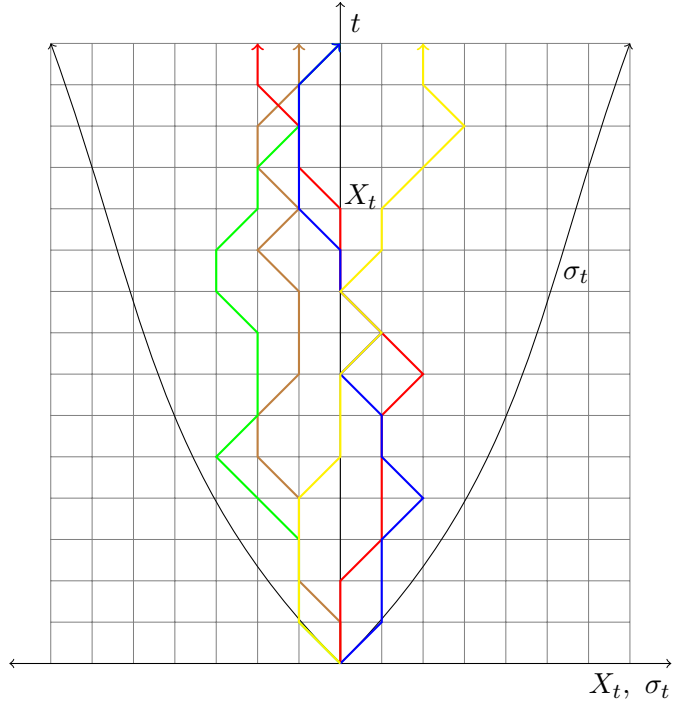
\end{center}

\subsection{Final Remarks}
\begin{itemize}
\item[A.] In the case of finite mean trapping time we have two scenarios: 
either there exists $\alpha >1$ such that $\mathbb{E}(T^\alpha)<\infty$, or 
$\mathbb{E}(T^\alpha)=\infty$ for each $\alpha >1$. In the first case the 
behavior of the system is controlled by the exponent
\[
\gamma=\sup\{\alpha > 1:\ \mathbb{E}(T)< \infty\}
      =\inf\{\alpha > 1:\ \mathbb{E}(T) = \infty\}.
\] 
In this case $\sqrt{\mu/t}\,X_t$ converges in law to the normal 
distribution, with a speed of the order $(1-\gamma)/(1+\gamma)$. This 
fact can be easily derived from Theorem~\ref{teo:CentralLimitTheorem}, 
taking into account that all constants involved in upper bounds vary 
continuously with the exponents. In the case where 
$\mathbb{E}(T^\alpha)=\infty$ for each $\alpha >1$, then in order to 
ensure convergence to the Gaussian, we have to restrict ourselves to 
trapping times in the domain of attraction of Cauchy's distribution 
(Theorem~\ref{teo:CentralLimitTheoremOne}). The same kind of restriction 
has to be assumed when dealing with distributions with infinite mean 
trapping time (Theorem~\ref{teo:CLTnonDiffusif}). It remains to 
determine whether these restrictions are of an essential nature or can 
be weakened and replaced by a hypothesis concerning only the value of 
the exponent of the largest finite moment as in 
Theorem~\ref{teo:CentralLimitTheorem}. 

\item[B.] In Theorem~\ref{teo:MSD-Subdiffusion} and 
Proposition~\ref{prop:ConcentrationSubdiffusion} we consider trapping time 
distributions with a leading power-law behavior, modified by a term varying 
slower than any power-law. This is required to ensure the 
power-law growth of the MSD. Nevertheless, the convergence towards a 
limit law for the corresponding random walk requires a little more 
control on the decay of the distribution: to insure convergence, 
the distribution has to be regularly varying, which is a strictly 
stronger condition. In this case, the condition cannot be weakened 
since regular variation is a necessary condition for being in the 
domain of attraction of a stable law.
\end{itemize}

\bigskip

\section*{Acknowledgments}

\noindent We thank CONACyT-ECOS (grant No. M16M01) and Fundaci\'on Marcos 
Moshinsky (C\'atedra de Investigaci\'on para J\'ovenes Cient\'\i ficos 2016) 
for their financial support. EU thanks Gelasio Salazar for his careful 
reading and valuable suggestions. 

\bigskip
\bibliographystyle{plain}

\appendix
\section{Variation slower than any power-law}\label{app:slower}

\noindent 
A strictly positive function $g:\mathbb{N}\to (0,\infty)$ varies slower 
than any power-law if $\lim_{t\to\infty}g(t)\,t^{-\epsilon}=0$ and 
$\lim_{t\to\infty}g(t)\,t^{\epsilon}=\infty$ for any $\epsilon>0$. 
In particular, any regularly varying function of order zero varies 
slower than any power-law. We have the following.

\medskip\noindent {\bf Claim 1.}
\emph{Let $t\mapsto g(t)$ and $t\mapsto h(t)$ be two functions varying 
slower than any power-law. Then the following are functions varying 
slower than any power-law.
\begin{center}
\begin{tabular}{lll}
a) $t\mapsto \lambda\,g(t)$ with $\lambda >0$, 
   & b) $t\mapsto g(t)+h(t)$, 
   & c) $t\mapsto g(t)\,h(t)$, \\  
d) $t\mapsto 1/g(t)$,       
   & e) $t\mapsto \min_{\mu\,t\leq \tau\leq \lambda\, t}g(\tau)$,  
   & f) $t\mapsto \max_{\mu\,t\leq \tau\leq \lambda\, t}g(\tau)$
                            with $0\leq \mu < \lambda$.
\end{tabular}
\end{center}
Furthermore, if $g(t)\leq f(t)\leq h(t)$ for each $t\in\mathbb{N}$, then \\
g) $t\mapsto f(t)$ varies slower than any power-law, \\
and, if $\tau\mapsto P(\tau)\geq 0$ is such that
$0 < \sum_{s \geq 1}P(s)\,(s+1)^{\epsilon_0} < \infty$ for some 
$\epsilon_0 >0$, then \\
h) $t\mapsto \sum_{s\geq 1} P(s)\,\max_{s\,t < \tau\leq (s+1)\,t}g(\tau)$, 
varies slower than any power-law as well.}

\medskip \begin{proof}
Items $a)$ to $d)$ are easily proved and are let to the reader. 

\medskip \noindent For $e)$ let 
$\ell_t=\max\left\{\mu\,t\leq \tau\leq \lambda\,t:\
           g(\tau)=\min_{\mu\,t\leq s\leq \lambda\, t}g(s)\right\}$.
Supposing that $\lim_{t\to\infty}\ell_t=\infty$, we have
\begin{align*}
\lim_{t\to\infty}t^{-\epsilon}\,\min_{\mu\,t\leq \tau\leq \lambda\, t}g(\tau)
& = \lim_{t\to\infty}t^{-\epsilon}\,g(\ell_t)\leq 
     \lambda^{\epsilon}\,\lim_{t\to\infty}\ell_t^{-\epsilon}\,g(\ell_t)
  = 0,\\
\lim_{t\to\infty}t^\epsilon\,\min_{\mu\,t\leq \tau\leq \lambda\, t}g(\tau)
& = \lim_{t\to\infty}t^\epsilon\,g(\ell_t)\geq 
    \lambda^{-\epsilon}\,\lim_{t\to\infty}\ell_t^\epsilon\,g(\ell_t)
  =\infty.\\
\end{align*}
Now, if $\lim_{t\to\infty}\ell_t=\ell<\infty$, in which case $\mu=0$, then $g(\ell)\leq \min_{\tau\leq \lambda\,t}g(t)\leq \max_{\tau\leq\ell}g(\tau)$ for each $t\in\mathbb{N}$, and therefore $t\mapsto \min_{\tau\leq \lambda\,t}g(t)$ varies slower than any power-law.  

\medskip\noindent
Item $f)$ directly follows from $e)$ and $d)$ by noticing that $\max_{\mu\,t\leq t\leq\lambda\,t}g(\tau)=\left(\min_{\mu\,t\leq \tau\leq \lambda\,t}1/g(\tau)\right)^{-1}$.

\medskip\noindent For $g)$, it is enough to notice that
\begin{align*}
\lim_{t\to\infty}t^{-\epsilon}\,f(t)
&\leq \lim_{t\to\infty}t^{-\epsilon}\,h(t)=0,\\
\lim_{t\to\infty}t^{\epsilon}\,f(t)
&\geq \lim_{t\to\infty}t^{\epsilon}\,g(t)=\infty.
\end{align*}
for each $\epsilon >0$.

\medskip\noindent 
For $h)$, let $u_{s,t}=\max\left\{s\,t < \tau\leq (s+1)\,t:\
      g(\tau)=\max_{\mu\,t\leq s\leq \lambda\, t}g(s)\right\}$.
For each $\epsilon \leq 2\,\epsilon_0$ we have
\begin{align*}
t^{-\epsilon}\,
 \sum_{s\geq 1}P(s)\max_{s\,t <\tau\leq (s+1)\,t}g(\tau)
&=
 t^{-\epsilon/2}\,\sum_{s\geq 1}P(s)\,g(u_{s,t})\,u_{s,t}^{-\epsilon/2}
 \left(\frac{t}{u_{s,t}}\right)^{-\epsilon/2}\\
&\leq 
 t^{-\epsilon/2}\,
 \sum_{s\geq 1}P(s)\,(s+1)^{\epsilon/2}\,g(u_{s,t})\,u_{s,t}^{-\epsilon/2},\\
t^{\epsilon}\,
 \sum_{s\geq 1}P(s)\max_{s\,t <\tau\leq (s+1)\,t}g(\tau)
&=
 t^{\epsilon/2}\,\sum_{s\geq 1}P(s)\,g(u_{s,t})\,u_{s,t}^{\epsilon/2}
 \left(\frac{t}{u_{s,t}}\right)^{\epsilon/2}\\
&\geq 
 t^{\epsilon/2}\,
 \sum_{s\geq 1}P(s)\,(s+1)^{-\epsilon/2}\,g(u_{s,t})\,u_{s,t}^{\epsilon/2}.
\end{align*}
Since $\lim_{t\to\infty}g(t)\,t^{-\epsilon/2}=0$ then 
$g(t)\,t^{-\epsilon/2}$ is bounded, and therefore
\[
\lim_{t\to\infty}t^{-\epsilon}\,
 \sum_{s\geq 1}P(s)\max_{s\,t <\tau\leq (s+1)\,t}g(\tau)
\leq 
 \left(\max_{s\geq 1}g(s)\,s^{-\epsilon/2}\right)\,
 \left(\sum_{s\geq 1}P(s)\,(s+1)^{\epsilon/2}\right)\,
 \lim_{t\to\infty} t^{-\epsilon/2}=0. 
\]
On the other hand, since $g(t)\,t^{\epsilon/2}$ diverges, then
\[
\lim_{t\to\infty}t^{\epsilon}\,
 \sum_{s\geq 1}P(s)\max_{s\,t <\tau\leq (s+1)\,t}g(\tau)
\geq 
 \left(\sum_{s\geq 1}P(s)\,(s+1)^{-\epsilon/2}\right)\,
 \lim_{t\to\infty} t^{\epsilon/2}=\infty. 
\]
The same obviously holds for $\epsilon > 2\epsilon_0$.
\end{proof}

\end{document}